\def\A{{\mathbb A}}
\def\C{{\mathbb C}}
\def\F{{\mathbb F}}
\def\P{{\mathbb P}}
\def\Q{{\mathbb Q}}
\def\R{{\mathbb R}}
\def\Z{{\mathbb Z}}
\theoremstyle{plain}
\newtheorem{theorem}{Theorem}[section]
\newtheorem{lemma}[theorem]{Lemma}
\newtheorem{proposition}[theorem]{Proposition}
\newtheorem{corollary}[theorem]{Corollary}
\newtheorem{definition}[theorem]{Definition}
\newtheorem{remark}{Remark}
\newtheorem{definition and lemma}[theorem]{Definition and Lemma}
\theoremstyle{definition} 
\newtheorem{example}[theorem]{Example}
\theoremstyle{remark} 
\DeclareFontFamily{U}{wncy}{}
\DeclareFontShape{U}{wncy}{m}{n}{<->wncyr10}{}
\DeclareSymbolFont{mcy}{U}{wncy}{m}{n}
\DeclareMathSymbol{\Sha}{\mathord}{mcy}{"58}
\title[Jordan Groups]{Jordan constants of quaternion algebras over number fields and simple abelian surfaces over fields of positive characteristic}
\author[WonTae Hwang]{WonTae Hwang$^*$}
\address{School of Mathematics, Korea Institute for Advanced Study, Hoegiro 85, Seoul, South Korea}
\email{hwangwon@kias.re.kr}
\begin{document}

\subjclass[2010]{Primary 20E07, 11R52, 11G10, 14G17, 14K02}

\keywords{Jordan groups, Quaternion algebras over number fields, Abelian surfaces over fields of positive characteristic}

\maketitle

\begin{abstract}
We compute and provide a detailed description on the Jordan constants of the multiplicative subgroup of quaternion algebras over number fields of small degree. As an application, we determine the Jordan constants of the multiplicative subgroup of the endomorphism algebras of simple abelian surfaces over fields of positive characteristic.    \\

\end{abstract}

\section{Introduction}
Let $G$ be a group. Then the notion of $G$ being a \emph{Jordan group}\footnote{It was named after a French mathematician Camille Jordan, and the classical Jordan's theorem on complex general linear groups provides somewhat fundamental examples of Jordan groups.} was introduced by Popov in \cite{Popov(2010)}. If $G$ is a Jordan group, then we can also consider the Jordan constant of $G$ and denote it by $J_G.$ (For the precise definition, see Definition \ref{Jordan} below.) In view of the classical result of Jordan, saying that the general linear group $\textrm{GL}_n(k)$ for an integer $n \geq 1$ and an algebraically closed field $k$ of characteristic zero is a Jordan group, we can deduce that every affine algebraic group is also a Jordan group. Other interesting examples arise in connection with algebraic geometry. Let $X$ be an irreducible algebraic variety over an algebraically closed field $k$ of characteristic zero, and let $G=\textrm{Bir}(X)$, the group of birational automorphisms of $X$. We might ask whether $G$ is a Jordan group or not. If $X$ is a smooth projective rational variety of dimension $n$, then $G=\textrm{Cr}_n(k),$ the Cremona group of degree $n$ over $k,$ and it was known that $\textrm{Cr}_n (k)$ is a Jordan group for $n=1,2,3.$ More recently, Yuri Prokhorov and Constantin Shramov \cite{PS(2016)} proved a nice result that $\textrm{Cr}_n$ is indeed a Jordan group for arbitrary $n>3$, too, modulo the Borisov-Alexeev-Borisov conjecture, which was also recently proved by Caucher Birkar \cite{CB2016}, and they \cite{PS(2017)} computed the Jordan constants of the Cremona groups of rank $2$ and $3.$ Also, Sheng Meng and De-Qi Zhang \cite{MD(2018)} proved that the full automorphism group of any projective variety over $k$ is a Jordan group, using algebraic group theoretic argument. On the other hand, if the dimension of an irreducible variety $X$ over $k$ is small enough in the sense that $\textrm{dim}~X \leq 2,$ then it is known in \cite[Theorem 2.32]{Popov(2010)} that the group $G:= \textrm{Bir}(X)$ is a Jordan group if and only if $X$ is not birationally isomorphic to $\P^1_k \times E$ where $E$ is an elliptic curve over $k.$ In addition, we have a similar but slightly more complicated result for the case of $X$ being a threefold \cite[Theorem 1.8]{PS(2018)}. \\
\indent In this paper, we consider the case of $G$ being the multiplicative subgroup of a quaternion division algebra over a number field of small degree to prove that $G$ is a Jordan group, and we also explicitly compute the Jordan constant $J_G$ of $G$. This gives another context that connects the theory of Jordan groups, algebraic geometry and algebraic number theory. In this aspect, one of our main results is the following
\begin{theorem}\label{main theorem1}
	Let $\mathcal{D}$ be a quaternion algebra over $\Q$ (resp.\ over a quadratic number field). Then $\mathcal{D}^{\times}$ is a Jordan group and $J_{\mathcal{D}^{\times}} \in \{1,2,12\}$ (resp.\ $J_{\mathcal{D}^{\times}} \in \{1,2,12,24,60\}$). Furthermore, we have an explicit description on the relation between $\mathcal{D}$ and $J_{\mathcal{D}^{\times}}$ in terms of the center of $\mathcal{D}$ and the ramification set of $\mathcal{D}.$
\end{theorem}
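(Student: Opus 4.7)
Let $F$ denote the center of $\mathcal{D}$. My plan has three stages: classify the finite subgroups $H\leq\mathcal{D}^{\times}$ up to isomorphism, compute the Jordan constant for each isomorphism type, and determine which types are actually realized in terms of $F$ and the ramification set of $\mathcal{D}$.

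\emph{Reduction step.} Let $H\leq\mathcal{D}^{\times}$ be finite; the $F$-subalgebra $F[H]\subseteq\mathcal{D}$ is semisimple of $F$-dimension $1$, $2$, or $4$. (Dimension $3$ is ruled out because a $3$-dimensional semisimple $F$-algebra is necessarily commutative and requires an idempotent decomposition incompatible with any quaternion algebra, whose maximal commutative subalgebras have dimension $2$.) If $\dim_F F[H]\leq 2$ then $F[H]$ is commutative, so $H$ is abelian and contributes only $1$ to the Jordan constant. Hence the problem reduces to finite $H$ with $F[H]=\mathcal{D}$.

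\emph{Classification and Jordan constants.} Since $\mathcal{D}\otimes_F\overline{F}\cong M_2(\overline{F})$ and the characteristic is zero, any such $H$ embeds in $\GL_2(\C)$. Klein's classification of finite subgroups of $\SL_2(\C)$, together with Amitsur's classification of finite subgroups of division algebras, forces $H$ to be cyclic, dicyclic (binary dihedral), or one of the binary polyhedral groups $2T$, $2O$, $2I$. Direct inspection of the normal subgroup lattice yields the Jordan constants
\begin{equation*}
J(\text{cyclic})=1,\quad J(\text{dicyclic})=2,\quad J(2T)=12,\quad J(2O)=24,\quad J(2I)=60,
\end{equation*}
since in the last three cases the unique maximal normal abelian subgroup is the center $\{\pm 1\}$: the only proper nontrivial normal subgroups of $2T$ are $\{\pm 1\}$ and $Q_8$; those of $2O$ form the chain $\{\pm 1\}\subset Q_8\subset 2T$; and $2I/\{\pm 1\}\cong A_5$ is simple.

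\emph{Realizability; main obstacle.} The hard step, which will yield the announced explicit description, is to decide when each listed group actually embeds in $\mathcal{D}^{\times}$. The faithful $2$-dimensional representations of $2T$, $2O$, $2I$ have character fields $\Q$, $\Q(\sqrt{2})$, $\Q(\sqrt{5})$ respectively and generate canonical quaternion algebras whose Hasse invariants can be written down by hand. An embedding $H\hookrightarrow\mathcal{D}^{\times}$ exists exactly when this canonical algebra, base-changed to $F$, is isomorphic to $\mathcal{D}$; this is a local comparison of Brauer invariants. Over $F=\Q$ the options $2O$ and $2I$ are excluded (since $\sqrt{2},\sqrt{5}\notin\Q$), leaving $J_{\mathcal{D}^{\times}}\in\{1,2,12\}$; over a quadratic $F\supseteq\Q(\sqrt{2})$ or $F\supseteq\Q(\sqrt{5})$ the additional values $24$ and $60$ become available, producing the list $\{1,2,12,24,60\}$. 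The chief obstacle will be to carry out this Brauer comparison uniformly across all choices of $F$ and ramification, and to package the outcome so that the existence of each isomorphism type of finite subgroup can be read off from the invariants of $\mathcal{D}$.
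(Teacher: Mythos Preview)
Your proposal is correct and follows essentially the same route as the paper: classify the finite subgroups of $\mathcal{D}^{\times}$ as cyclic, dicyclic, or binary polyhedral (the paper cites Amitsur and a prior classification result here), compute their Jordan constants by inspecting normal abelian subgroups exactly as you do, and decide realizability via a Brauer/ramification comparison (the paper packages this through Amitsur's and Nebe's tables rather than redoing the local invariants by hand). One point you leave implicit, which the paper isolates as a separate preliminary lemma, is the identity $J_G=\sup_{H}J_H$ over finite subgroups $H\leq G$; this is what justifies reading off $J_{\mathcal{D}^{\times}}$ as the maximum of $\{1,2,12,24,60\}$ over the realized isomorphism types.
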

The proof of Theorem \ref{main theorem1} requires a variety of knowledge including theory of quaternion algebras over number fields and the ramification theory of number fields, combined carefully. For more details, see Theorems \ref{over Q main thm} and \ref{over quad main thm} below. \\
\indent As an application of the above result, we compute the Jordan constants of the multiplicative subgroup of the endomorphism algebras of simple abelian surfaces over fields of positive characteristic. To this aim, we recall: let $X$ be an abelian surface over a field $k.$ We denote the endomorphism ring of $X$ over $k$ by $\textrm{End}_k(X)$. It is a free $\Z$-module of rank $\leq 16.$ Because it is usually harder to deal with the ring $\textrm{End}_k(X),$ we also consider $\textrm{End}^0_k(X):=\textrm{End}_k(X) \otimes_{\Z} \Q.$ This $\Q$-algebra $\textrm{End}_k^0(X)$ is called the endomorphism algebra of $X$ over $k.$ Then $\textrm{End}_k^0(X)$ is a finite dimensional semisimple algebra over $\Q$ with $\textrm{dim}_{\Q} \textrm{End}_k^0(X) \leq 16.$ Moreover, if $X$ is $k$-simple, then $\textrm{End}_k^0(X)$ is a division algebra over $\Q$. Then another main result of this paper is summarized as follows:
\begin{theorem}\label{main theorem2}
Let $n$ be an integer. Then we have: \\
(a) There exists a simple abelian surface $X$ over a finite field $k:=\F_q$ with $q=p^a$ for some prime $p>0$ and an integer $a \geq 1$ such that $J_{D^{\times}}=n,$ where $D:=\textrm{End}_k^0(X)$ if and only if $n \in \{1,2,12,24,60\}.$
(b) There exists a simple abelian surface $X$ over some algebraically closed field $k$ of characteristic $p > 0$ such that $J_{D^{\times}}=n,$ where $D:=\textrm{End}_k^0(X)$ if and only if $n=1.$
\end{theorem}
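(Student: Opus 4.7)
The plan is to reduce Theorem~\ref{main theorem2} to Theorem~\ref{main theorem1} via the Honda--Tate correspondence and the classification of endomorphism algebras of simple abelian surfaces in positive characteristic. For a simple abelian surface $X$ over $k = \F_q$ with Frobenius endomorphism $\pi_X$, write $D = \End_k^0(X)$ and $F = \Q(\pi_X) = Z(D)$; the standard identity $[D:F]^{1/2}\cdot[F:\Q] = 2\dim X = 4$ restricts $([F:\Q],\,[D:F]^{1/2})$ to $(4,1)$, $(2,2)$, or $(1,4)$, corresponding respectively to $D$ being a CM quartic field, a quaternion division algebra over a quadratic field, or a central division algebra of dimension $16$ over $\Q$.

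For part (a), the case $(1,4)$ would force $\pi_X \in \Q$ and hence $\pi_X = \pm\sqrt{q}$; then Honda--Tate identifies $X$ up to isogeny with the square of a supersingular elliptic curve, contradicting simplicity. Case $(4,1)$ gives $D = F$ commutative, so $J_{D^\times}=1$, while case $(2,2)$ places $D$ in the scope of Theorem~\ref{main theorem1}, yielding $J_{D^\times} \in \{1,2,12,24,60\}$. For the converse, we construct surfaces realizing each value: $n=1$ is abundantly realized by any ordinary simple abelian surface, and for $n \in \{2,12,24,60\}$ we use the surjectivity of Honda--Tate to exhibit a Weil $q$-number $\pi$ generating a quadratic field whose associated division algebra has the ramification prescribed by Theorem~\ref{main theorem1}. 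A convenient source of examples is the family of simple supersingular abelian surfaces over $\F_{p^a}$ with $a$ odd, whose center is $\Q(\sqrt{p})$ and whose $D$ is ramified exactly at the two infinite places and the unique prime above $p$; varying $p$ accesses the remaining Jordan constants.

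For part (b), any supersingular abelian surface over an algebraically closed field $\kbar$ of characteristic $p > 0$ is isogenous to $E \times E$ for a supersingular elliptic curve $E$ and hence never simple. Consequently, every simple abelian surface $X/\kbar$ is non-supersingular, and by the classification of endomorphism algebras in positive characteristic its endomorphism algebra $D$ is a commutative (CM) field. Thus $D^\times$ is abelian and $J_{D^\times}=1$; the converse is trivial. The main obstacle of the whole proof lies in the constructive side of (a): one must match, case by case, each Jordan constant in $\{2,12,24,60\}$ with a genuine simple abelian surface whose Honda--Tate data prescribes exactly the local invariants of $D$ required by Theorem~\ref{main theorem1}, which is a delicate local--global bookkeeping task.
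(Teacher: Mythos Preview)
Your overall architecture for part (a) matches the paper's, and the forward direction is fine. However, your description of the ramification in the realization step is wrong: for $\pi=\sqrt{p}$ over $k=\F_p$ (so $q=p$), the unique prime $\mathfrak{p}$ of $K=\Q(\sqrt{p})$ above $p$ is totally ramified, and Proposition~\ref{local inv} gives
\[
\textrm{inv}_{\mathfrak{p}}(D)=\frac{\ord_{\mathfrak{p}}(\pi)}{\ord_{\mathfrak{p}}(q)}\cdot[K_{\mathfrak{p}}:\Q_p]=\frac{1}{2}\cdot 2\equiv 0,
\]
so $D$ is \emph{unramified} at $\mathfrak{p}$ and ramified exactly at the two real places. (Your claimed ramification set has odd cardinality, which already violates reciprocity.) The paper does exactly what you propose---take $h=t^2-p$ for $p\in\{73,17,3,2,5\}$---but identifies $D\cong D_{p,\infty}\otimes_{\Q}\Q(\sqrt{p})$ with $R_D=R_\infty$, and then reads off $J_{D^\times}$ from Corollary~\ref{quad cor}. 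With that correction, your approach to (a) is the same as the paper's.

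Your argument for part (b) has a genuine gap. Over an algebraically closed field of characteristic $p>0$, it is \emph{not} true that a simple non-supersingular abelian surface has commutative endomorphism algebra: by Oort's classification (Proposition~\ref{closed prop}), $D$ can be $\Q$, a real quadratic field, a CM quartic field, \emph{or an indefinite quaternion algebra over~$\Q$}. The last case is noncommutative, so your sentence ``$D$ is a commutative (CM) field'' is false and the conclusion $J_{D^\times}=1$ does not follow from abelianness. The paper handles this missing case by invoking Theorem~\ref{over Q main thm}: since an indefinite $D$ over $\Q$ has $\infty\notin R_D$, neither $R_D=\{2,\infty\}$ nor $R_D=\{3,\infty\}$ can occur, whence $J_{D^\times}=1$. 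You need this extra step; once you add it, your treatment of (b) agrees with the paper's.
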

The proof of Theorem \ref{main theorem2} requires a concrete knowledge of the theory of abelian varieties over fields of positive characteristic, together with Theorem \ref{main theorem1}. More precisely, the proof of one direction of part (a) is obtained by constructing desired simple abelian surfaces concretely, while the proof of the same direction of part (b) is achieved by adopting a slightly more abstract argument.
For more details, see Corollary \ref{simple cor} and Theorems \ref{realized finite}, \ref{realized closed} below. \\ 

\indent Another thing that might be interesting is the following observation, which is related to Theorem \ref{main theorem2}-(a) above:  
\begin{theorem}\label{main theorem3}
Let $S$ be the set of pairs $(n,p)$ of an integer $n \geq 1$ and a prime $p$ with the property that there is a simple abelian surface $X$ over $k:=\F_{q}$ with $q=p^a$ ($a \geq 1$) and $J_{D^{\times}}=n$, where $D:=\textrm{End}_k^0(X).$ If we choose an element $(n,p)$ randomly from $S,$ then it is most probable that $n =12$. 
\end{theorem}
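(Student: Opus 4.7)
The plan is to use Theorems~\ref{main theorem1} and~\ref{main theorem2} together with Honda--Tate theory to characterize, for each $n \in \{1,2,12,24,60\}$, the set of primes
\[
  S_n := \{\, p : (n,p) \in S \,\},
\]
and then to compare their asymptotic sizes.

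By Theorem~\ref{main theorem2}(a) one has $S \subseteq \{1,2,12,24,60\} \times \{\text{primes}\}$. The value $n = 1$ arises only when $D$ is a CM field of degree four, while $n \in \{2,12,24,60\}$ arises when $D$ is a quaternion division algebra over a (real or imaginary) quadratic field $F$. In the quaternion case the refined version of Theorem~\ref{main theorem1} will determine $J_{D^{\times}}$ from the center $F$ and the ramification set $\mathrm{Ram}(D)$: reading off that classification, $J_{D^{\times}} = 60$ forces $F = \Q(\sqrt{5})$ with a rigid ramification pattern; $J_{D^{\times}} = 24$ forces $F$ to lie in a short list of specific quadratic fields (notably involving $\sqrt{2}$) with restricted ramifications; $J_{D^{\times}} = 2$ has its own rigidity; whereas $J_{D^{\times}} = 12$ is the generic case, realized over a broad family of centers and ramifications.

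The next step will be to apply Honda--Tate: a quaternion division algebra $D/F$ arises as the endomorphism algebra of a simple abelian surface over $\F_{p^a}$ if and only if $D$ is ramified exactly at the places of $F$ lying above $p$ (together with both infinite places when $F$ is totally real), with local invariants prescribed by a suitable Weil $p^a$-number. Intersecting this with the rigid ramification patterns from the previous paragraph should force $S_{60}$, $S_{24}$, and $S_2$ to be finite sets (the primes lying under the rigidly determined ramification places), while the flexibility in the $n = 12$ classification should produce, for all but finitely many primes $p$, a Weil $p^a$-number and a corresponding quaternion algebra meeting the hypotheses of Theorem~\ref{main theorem1} that yield $J_{D^{\times}} = 12$; hence $S_{12}$ is cofinite in the primes. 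Under the natural counting $|S(P)| = \sum_n |S_n \cap \{p \leq P\}|$, the contribution $|S_{12}(P)| = \pi(P) - O(1)$ will then be asymptotically dominant (and strictly larger than $|S_n(P)|$ for $n \in \{2,24,60\}$, which are $O(1)$), so the probability of drawing $n = 12$ is the largest.

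The main obstacle will be the Honda--Tate bookkeeping in the third step: one must verify that the rigidly constrained quaternion algebras giving $n \in \{2,24,60\}$ are in fact realized by only finitely many primes, which amounts to ruling out accidental Weil-number realizations via a careful case-by-case analysis in $\Q(\sqrt{2})$, $\Q(\sqrt{5})$, and a handful of other small quadratic fields. A secondary but nontrivial task will be to make precise the sense of ``most probable'' used in the statement and to confirm that, in that sense, $n = 12$ strictly dominates over $n = 1$ as well as over $n \in \{2, 24, 60\}$.
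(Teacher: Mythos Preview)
Your plan diverges from the paper's argument and, more importantly, contains claims that are false and would lead you to the \emph{wrong} conclusion.

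The paper does not attempt to characterize the sets $S_n=\{p:(n,p)\in S\}$ at all. Its entire justification is Corollary~\ref{quad cor} together with the construction in Theorem~\ref{realized finite}: for each prime $p$ one takes the Weil polynomial $h=t^2-p$, which produces a simple abelian surface over $\F_p$ with $D\cong D_{p,\infty}\otimes_{\Q}\Q(\sqrt{p})$, and Corollary~\ref{quad cor} then reads off the Jordan constant from a congruence on $p$. In that one-parameter family the value $n=12$ occurs for the primes $p\equiv 3\pmod 4$ or $p\equiv 5\pmod 8$ (with $p\neq 2,5$), i.e.\ for Dirichlet density $3/4$ of primes, while $n\in\{1,2\}$ each get density $1/8$ and $n\in\{24,60\}$ occur for a single prime. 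That is the sense in which ``$n=12$ is most probable''; the statement is heuristic and the paper treats it as such (see Remark~\ref{dist rem}).

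Your proposed route breaks down at several points. First, $S_2$ is \emph{not} finite: by Corollary~\ref{quad cor} every prime $p\equiv 17\pmod{24}$ lies in $S_2$, so your rigidity claim for $n=2$ fails. Second, $S_{12}$ is \emph{not} cofinite: for $p\equiv 1\pmod 8$ the real-quadratic case forces $K=\Q(\sqrt{p})$ and then $D_{p,\infty}\otimes_{\Q}\Q(\sqrt{p})\not\cong D_{2,\infty}\otimes_{\Q}\Q(\sqrt{p})$ (since $2$ splits), while in the imaginary-quadratic case Theorem~\ref{over quad main thm} forces the ramification of $D$ to lie above $2$, which contradicts Proposition~\ref{local inv} unless $p=2$; so $(12,p)\notin S$ for $p\equiv 1\pmod 8$. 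Third, and most damaging, your assertion that ``$n=1$ arises only when $D$ is a CM field of degree four'' is false (Theorem~\ref{over quad main thm} has a large ``otherwise'' clause), and in any case $S_1$ contains \emph{every} prime: for each $p$ there are ordinary simple abelian surfaces over $\F_p$ whose endomorphism algebra is a quartic CM field, giving $J_{D^{\times}}=1$. Hence under your counting scheme $|S_1(P)|=\pi(P)$ strictly dominates $|S_{12}(P)|\sim\tfrac{3}{4}\pi(P)$, and you would conclude that $n=1$, not $n=12$, is most probable --- the opposite of the theorem.

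The resolution is that the intended meaning of ``most probable'' is tied to the explicit family in Corollary~\ref{quad cor}, not to the fibers $S_n$ of the full set $S$. You should abandon the $S_n$ framework and instead argue directly from that corollary.
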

This can be derived from Corollary \ref{quad cor}. For a slightly more detailed statement, see Remark \ref{dist rem} below. \\


This paper is organized as follows: Section~\ref{prelim} is devoted to several facts which are related to our desired results. Explicitly, we briefly review the theory of Jordan groups ($\S$\ref{jordan gp}), and then, we recall some facts about endomorphism algebras of simple abelian varieties ($\S$\ref{end alg av}), the theorem of Tate ($\S$\ref{thm Tate sec}), Honda-Tate theory ($\S$\ref{thm Honda}), and a result of Waterhouse ($\S$\ref{thm waterhouse}). In Section~\ref{main}, we obtain the desired results (Theorems \ref{main theorem1} and \ref{main theorem2}-(a) above) using the facts that were introduced in the previous sections. Finally, in Section \ref{alg closed case}, we prove another desired result, namely, Theorem \ref{main theorem2}-(b) above. \\

In the sequel, let $\overline{k}$ denote an algebraic closure of a field $k.$ Also, for an integer $n \geq 1,$ we denote the cyclic group of order $n$ by $C_n$.

\section{Preliminaries}\label{prelim}
In this section, we recall some of the facts in the theory of Jordan groups and the general theory of abelian varieties over a field. Our main references are \cite{2}, \cite{8}, and \cite{Popov}.

\subsection{Jordan Groups}\label{jordan gp}
In this section, we review the notion of Jordan groups, which is of our main interest in this paper. Our main reference is a paper of Popov \cite{Popov}.  
\begin{definition}\label{Jordan}
A group $G$ is called a \emph{Jordan group} if there exists an integer $d>0$, depending only on $G$, such that every finite subgroup $H$ of $G$ contains a normal abelian subgroup whose index in $H$ is at most $d.$ The minimal such $d$ is called the \emph{Jordan constant of $G$} and is denoted by $J_{G}$.
\end{definition}
In particular, we note that if every finite subgroup of $G$ is abelian, then $G$ is a Jordan group and $J_G =1.$ The following example illustrates this observation:

\begin{example}\label{ab exam}
Let $k$ be an algebraically closed field of characteristic zero, and let $X \subseteq \A_k^4$ be the nonsingular hypersurface defined by the equation $x_1^2 x_2 +x_3^2 + x_4^3 +x_1 =0.$ Then in view of \cite[\S2.2.5]{Popov}, every finite subgroup of $\textrm{Aut}(X)$ is cyclic, and hence, we have $J_{\textrm{Aut}(X)}=1.$
\end{example}
In this paper, we are interested in the situation which is somewhat different from that of Example \ref{ab exam} (see Section \ref{main} below). \\
\indent For our later use in Section \ref{main} to compute the Jordan constants of certain infinite groups, we also record the following useful result:
\begin{lemma}\label{fund lem}
Let $G$ be a Jordan group. Then we have
\begin{equation*}
J_G = \sup_{H \leq G} J_H
\end{equation*}  
where the supremum is taken over all finite subgroups $H$ of $G.$
\end{lemma}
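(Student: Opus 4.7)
The plan is to prove both inequalities $J_G \leq \sup_{H \leq G} J_H$ and $J_G \geq \sup_{H \leq G} J_H$, where the supremum ranges over all finite subgroups $H$ of $G$. Denote this supremum by $s$ for brevity. Note that since $G$ is assumed to be Jordan, the constant $J_G$ is a well-defined positive integer, and we will see as a byproduct that $s$ is finite.

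For the inequality $s \leq J_G$, I would fix a finite subgroup $H \leq G$ and observe that every finite subgroup $K \leq H$ is, in particular, a finite subgroup of $G$; since $G$ is Jordan with constant $J_G$, such a $K$ contains a normal abelian subgroup of index at most $J_G$ in $K$. Thus $H$ itself is a Jordan group with $J_H \leq J_G$, and passing to the supremum over finite $H \leq G$ gives $s \leq J_G$. In particular this shows $s < \infty$.

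For the reverse inequality $J_G \leq s$, I would use the defining minimality of $J_G$. By definition of $J_H$, each finite $H \leq G$ contains a normal abelian subgroup whose index in $H$ is at most $J_H$, and by definition of $s$ we have $J_H \leq s$ for every such $H$. Therefore every finite subgroup of $G$ contains a normal abelian subgroup of index at most $s$, so $s$ is a valid choice of bound in Definition \ref{Jordan}; by minimality $J_G \leq s$. Combining the two inequalities yields the claimed equality.

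There is essentially no serious obstacle in this argument; it is a direct unwinding of Definition \ref{Jordan}. The only mild subtlety worth pointing out in the write-up is that the inequality $J_H \leq J_G$ for finite $H \leq G$ uses the fact that Jordan-ness is inherited by subgroups with the same (or better) Jordan constant, which follows immediately because finite subgroups of $H$ are finite subgroups of $G$.
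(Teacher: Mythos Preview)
Your proof is correct and follows essentially the same approach as the paper: both directions rest on the observation that finite subgroups of $G$ inherit the Jordan property with $J_H \leq J_G$, and that each finite $H$ has an abelian normal subgroup of index at most $J_H \leq s$, so $s$ serves as a valid bound in Definition~\ref{Jordan}. The only cosmetic difference is that the paper phrases the inequality $J_G \leq s$ as a proof by contradiction, whereas you argue it directly; the content is identical.
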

\begin{proof}
For convenience, let $\displaystyle d = \sup_{H \leq G} J_H.$ First, we note that if $H$ is a finite subgroup of $G$, then $H$ is also Jordan and $J_H \leq J_G.$ Hence it clearly follows that $d \leq J_G.$ To prove the reverse inequality, suppose on the contrary that $d< J_G.$ Then by the minimality of $J_G,$ we can see that there is a finite subgroup $H^{\prime}$ of $G$ such that $H^{\prime}$ contains no abelian normal subgroups of index $\leq d.$ Put $d^{\prime}:= J_{H^{\prime}}.$ Then since $d^{\prime} \leq d,$ it follows from the definition of $J_{H^{\prime}}$ that every finite subgroup of $H^{\prime}$ contains an abelian normal subgroup of index $\leq d^{\prime} \leq d.$ In particular, $H^{\prime}$ contains an abelian normal subgroup of index $\leq d$, which is a contradiction. Hence we get $J_G \leq d.$

\indent This completes the proof.
\end{proof}
This lemma is especially helpful if we have a great deal of information on the set of all the finite subgroups of an infinite group $G.$ For an explicit use of Lemma \ref{fund lem}, see the proof of Theorem \ref{over Q main thm} below.

\subsection{Endomorphism algebras of simple abelian varieties over a finite field}\label{end alg av}
In this section, we review some general facts about the endomorphism algebras of simple abelian varieties over a finite field. \\
\indent Let $X$ be a simple abelian variety of dimension $g$ over a finite field $k.$ Then it is well known that $\textrm{End}_k^0(X)$ is a division algebra over $\Q$ with $2g \leq \textrm{dim}_{\Q} \textrm{End}_k^0(X) < (2g)^2$ (see Corollary \ref{cor TateEnd0} below). Before giving our first result, we also recall Albert's classification. We choose a polarization $\lambda : X \rightarrow \widehat{X}$ where $\widehat{X}$ denotes the dual abelian variety of $X$. Using the polarization $\lambda,$ we can define an involution, called the \emph{Rosati involution}, $^{\vee}$ on $\textrm{End}_k^0(X).$ (For a more detailed discussion about the Rosati involution, see \cite[\S20]{8}.) In this way, to the pair $(X,\lambda)$ we associate the pair $(D, ^{\vee})$ with $D=\textrm{End}_k^0(X)$ and $^{\vee}$, the Rosati involution on $D$. We know that $D$ is a simple division algebra over $\Q$ of finite dimension and that $^{\vee}$ is a positive involution. Let $K$ be the center of $D$ so that $D$ is a central simple $K$-algebra, and let $K_0 = \{ x \in K~|~x^{\vee} = x \}$ be the subfield of symmetric elements in $K.$ By a theorem of Albert, $D$ (together with $^\vee$) is of one of the following four types: \\
\indent (i) Type I: $K_0 = K=D$ is a totally real field. \\
\indent (ii) Type II: $K_0 = K$ is a totally real field, and $D$ is a quaternion algebra over $K$ with $D\otimes_{K,\sigma} \R \cong M_2(\R)$ for every embedding $\sigma : K \hookrightarrow \R.$  \\
\indent (iii) Type III: $K_0 = K$ is a totally real field, and $D$ is a quaternion algebra over $K$ with $D\otimes_{K,\sigma} \R \cong \mathbb{H}$ for every embedding $\sigma : K \hookrightarrow \R$ (where $\mathbb{H}$ is the Hamiltonian quaternion algebra over $\R$). \\
\indent (iv) Type IV: $K_0 $ is a totally real field, $K$ is a totally imaginary quadratic field extension of $K_0$, and $D$ is a central simple algebra over $K$. \\
\indent Keeping the notations as above, we let
\begin{equation*}
e_0= [K_0 :\Q],~~~e=[K:\Q],~~~\textrm{and}~~~d=[D:K]^{\frac{1}{2}}.
\end{equation*}

As our last preliminary fact of this section, we impose some numerical restrictions on those values $e_0, e,$ and $d$ in the next table, following \cite[\S21]{8}.
\begin{center}
	\begin{tabular}{c c c}
		\hline
		& $\textrm{char}(k)=0$ & $\textrm{char}(k)=p>0$ \\
		\hline
		$\textrm{Type I}$ & $e|g$ & $e|g$  \\
		\hline
		$\textrm{Type II}$ & $2e|g$  & $2e|g$ \\
		\hline
		$\textrm{Type III}$ & $2e|g$ & $e|g$ \\
		\hline
		$\textrm{Type IV}$ & $e_0 d^2 |g$ &$ e_0 d |g $ \\
		\hline
	\end{tabular}
	\vskip 4pt
	\textnormal{Table 1 } \\
	\textnormal{Numerical restrictions on endomorphism algebras.}
\end{center}

If $g=2$ i.e.\ if $X$ is a simple abelian surface over $k,$ then we readily have the following
\begin{lemma}\label{poss end alg}
	Let $X$ be a simple abelian surface over a finite field $k=\F_q$, and let $\lambda : X \rightarrow \widehat{X}$ be a polarization. Then $D:=\textrm{End}_k^0(X)$ (together with the Rosati involution $^\vee$ corresponding to $\lambda$) is of one of the following three types: \\
	(1) $D$ is a totally definite quaternion algebra over either $\Q$ or a real quadratic field; \\
        (2) $D$ is a CM-field of degree $4$; \\
	(3) $D$ is a quaternion division algebra over an imaginary quadratic field.
\end{lemma}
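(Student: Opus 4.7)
The plan is to apply Albert's classification of positive involutions to the pair $(D, {}^\vee)$ and to intersect the resulting list with the numerical restrictions in Table~1 specialized to $g = 2$ and $\textrm{char}(k) = p > 0$, using also the lower bound $\dim_\Q D \geq 2g = 4$ recalled at the start of this subsection.

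Running through the four types in turn, Type~I immediately fails: the constraint $e \mid 2$ gives $\dim_\Q D = e \leq 2 < 4$, incompatible with the dimension bound. Type~III is allowed with $e \in \{1,2\}$ and produces a totally definite quaternion algebra over a totally real field $K$ equal to $\Q$ or a real quadratic field, which is exactly case~(1). Type~IV is governed by $e_0 d \mid 2$ and splits into the subcases $(e_0,d) \in \{(1,1),(1,2),(2,1)\}$: the first has $\dim_\Q D = 2 < 4$, excluded; the second gives a quaternion division algebra over an imaginary quadratic field, which is case~(3); and the third gives $D = K$ a CM-field of degree~$4$, which is case~(2).

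The main obstacle is to eliminate Type~II, which Table~1 permits with $e = 1$ (an indefinite quaternion algebra over $\Q$ of $\Q$-dimension~$4$) and which the dimension bound alone does not rule out. For this I would invoke Tate's theorem (reviewed in \S\ref{thm Tate sec}) to identify the center $K$ of $D$ with $\Q(\pi)$, where $\pi$ is the $q$-Frobenius of $X$. For Type~II one then needs $\pi \in K = \Q$, hence $\pi = \pm\sqrt{q}$; but by Honda--Tate theory (reviewed in \S\ref{thm Honda}), the simple isogeny class over $\F_q$ attached to such a rational Weil $q$-number is that of a supersingular elliptic curve, contradicting $\dim X = 2$. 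Combining these steps leaves exactly the alternatives (1), (2), and (3) asserted in the lemma.
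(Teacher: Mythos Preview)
Your argument is correct, and for Types~I, III, and~IV it matches the paper's proof essentially verbatim (dimension bound plus Table~1). The genuine difference is in how Type~II is eliminated. The paper dispatches Types~I and~II in one stroke by invoking Corollary~\ref{cor TateEnd0}(c): over a finite field $X$ is of CM-type, which forces $D$ to be of Type~III or~IV in Albert's list. You instead argue directly: Type~II with $e=1$ forces the center $\Q(\pi_X)$ to equal $\Q$, hence $\pi_X=\pm\sqrt{q}$, and the corresponding simple abelian variety is one-dimensional. This is a perfectly valid and more hands-on route; the paper's appeal to the CM-type property is shorter but hides exactly the mechanism you spell out.

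One small precision: the fact that a rational Weil $q$-number corresponds to a supersingular elliptic curve is not, strictly speaking, part of the Honda--Tate bijection (Theorem~\ref{thm HondaTata}) itself, which only gives existence and uniqueness of the isogeny class without computing its dimension. Within the paper's framework the cleanest citation is Corollary~\ref{cor TateEnd0}(e), whose equivalence $(\textrm{e-}2)\Leftrightarrow(\textrm{e-}6)$ (or $(\textrm{e-}2)\Leftrightarrow(\textrm{e-}4)$ combined with simplicity of $X$) immediately gives $g=1$ once $\Q[\pi_X]=\Q$. Alternatively, Proposition~\ref{local inv} together with Corollary~\ref{cor TateEnd0}(b) computes $d=2$, $e=1$, hence $2g=de=2$.
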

\begin{proof}
First, we recall that $X$ is of CM-type (see Corollary \ref{cor TateEnd0}-(c) below), and hence, either $D$ is of Type III or of Type IV in Albert's classification. We consider each type one by one. \\
(i) Suppose that $D$ is of Type III. By Table 1, either $e=1$ or $e=2,$ and hence, we get (1). \\
(ii) Suppose that $D$ is of Type IV. Then by the fact that $4 \leq \dim_{\Q} D <16$ and Table 1, we have that the pair $(e_0, d)$ is contained in the set $\{(1,2),(2,1)\}.$ If $(e_0,d)=(2,1),$ then we get (2). Finally, if $(e_0,d)=(1,2),$ then we get (3).\\
\indent This completes the proof.
\end{proof}


\subsection{The theorem of Tate}\label{thm Tate sec}
In this section, we recall an important theorem of Tate, and give some interesting consequences of it. \\
\indent Let $k$ be a field and let $l$ be a prime number with $l \ne \textrm{char}(k)$. If $X$ is an abelian variety of dimension $g$ over $k,$ then we can introduce the Tate $l$-module $T_l X$ and the corresponding $\Q_l$-vector space $V_l X :=T_l X \otimes_{\Z_l} \Q_l.$ It is well known that $T_l X$ is a free $\Z_l$-module of rank $2g$ and $V_l X$ is a $2g$-dimensional $\Q_l$-vector space. In \cite{11}, Tate showed the following important result for the case when $k$ is a finite field:

\begin{theorem}\label{thm Tate}
	Let $k$ be a finite field and let $\Gamma = \textrm{Gal}(\overline{k}/k).$ If $l$ is a prime number with $l \ne \textrm{char}(k),$ then we have: \\
	(a) For any abelian variety $X$ over $k,$ the representation
	\begin{equation*}
	\rho_l =\rho_{l,X} : \Gamma \rightarrow \textrm{GL}(V_l X)
	\end{equation*}
	is semisimple. \\
	(b) For any two abelian varieties $X$ and $Y$ over $k,$ the map
	\begin{equation*}
	\Z_l \otimes_{\Z} \textrm{Hom}_k(X,Y) \rightarrow \textrm{Hom}_{\Gamma}(T_l X, T_l Y)
	\end{equation*}
	is an isomorphism.
\end{theorem}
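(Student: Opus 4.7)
The plan is to follow Tate's original argument in \cite{11}, proving parts (a) and (b) in tandem via a finiteness principle for abelian varieties over finite fields. Both statements hinge on the fact that over $k = \F_q$ there are only finitely many $k$-isomorphism classes of abelian varieties of bounded invariants.

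First, I would dispose of the injectivity portion of (b). Since $\Hom_k(X,Y)$ is a finitely generated torsion-free $\Z$-module, it suffices to show that any $k$-homomorphism $\phi : X \to Y$ vanishing on $T_l X$ is the zero map: such a $\phi$ kills every $l^n$-torsion point, and $l$-power torsion is Zariski dense in $X_{\overline{k}}$, so $\phi = 0$. Upgrading this to the $\Z_l$-tensored map uses that $\Hom_k(X,Y)$ sits as a discrete submodule of $\Hom_{\Z_l}(T_l X, T_l Y)$ (the quotient is $l$-torsion-free on the image side).

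The heart of the proof is the following finiteness lemma: for fixed $X/k$ and $l \neq \ch(k)$, only finitely many $k$-isomorphism classes of abelian varieties $Y$ admit a $k$-isogeny $X \to Y$ of $l$-power degree. To establish it, I would transport a fixed polarization $\lambda$ on $X$ along any such isogeny $f : X \to Y$, obtaining a polarization $\lambda_Y$ on $Y$ whose degree is controlled by $\deg(f)$ and $\deg(\lambda)$; the classical boundedness of $k$-isomorphism classes of polarized abelian varieties of bounded degree over a finite field (via Hilbert schemes, or more concretely the finiteness of matrices of bounded entries in $\F_q$) then concludes.

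With the finiteness lemma in hand, I would prove (b) and (a) as follows. For any $\Gamma$-stable $\Z_l$-sublattice $W \subset V_l X$ commensurable with $T_l X$, one associates a $k$-abelian variety $X_W$ together with a canonical $k$-isogeny between $X$ and $X_W$, whose Tate module recovers $W$. For surjectivity in (b), given $\alpha \in \Hom_{\Gamma}(T_l X, T_l Y)$ one considers the graph of $\alpha$ inside $V_l(X \times Y)$ and constructs a sequence of $\Gamma$-stable lattices $W_n$ approximating it; by the finiteness lemma infinitely many of the $(X \times Y)_{W_n}$ are $k$-isomorphic, and chasing these coincidences produces an element of $\Z_l \otimes \Hom_k(X,Y)$ that hits $\alpha$. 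For (a), if $\rho_l$ were not semisimple, there would be a nontrivial Jordan block of Frobenius, and one could exhibit an infinite sequence of pairwise non-isomorphic $X_{W_n}$, again contradicting finiteness. The main obstacle I anticipate is making the pigeonhole sharp enough: the lattices $W_n$ must be chosen so that a genuine $k$-isomorphism $X_{W_n} \cong X_{W_m}$ (and not merely an abstract one) translates into a new $k$-endomorphism with the prescribed $l$-adic action.
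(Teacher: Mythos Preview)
The paper does not supply its own proof of this theorem: it is stated as a recalled result with the attribution ``In \cite{11}, Tate showed the following important result,'' and no \texttt{proof} environment follows. Your proposal is precisely a sketch of Tate's original argument from \cite{11}, so it is entirely consistent with what the paper does---namely, defer to that source---only more explicit. The outline you give (injectivity via Zariski density of $l$-power torsion, the finiteness lemma for $l$-power isogenous abelian varieties over a finite field via transported polarizations, and the lattice/pigeonhole construction for surjectivity and semisimplicity) is the standard route and is correct at the level of a sketch; the caveat you flag about making the pigeonhole step precise is indeed where the care lies in Tate's paper, but there is no genuine gap in your plan.
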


Now, we recall that an abelian variety $X$ over a (finite) field $k$ is called \emph{elementary} if $X$ is $k$-isogenous to a power of a simple abelian variety over $k.$ Then, as an interesting consequence of Theorem \ref{thm Tate}, we have the following fundamental result:
\begin{corollary}\label{cor TateEnd0}
	Let $X$ be an abelian variety of dimension $g$ over a finite field $k.$ Then we have:\\
	(a) The center $Z$ of $\textrm{End}_k^0(X)$ is the subalgebra $\Q[\pi_X]$ where $\pi_X$ denotes the Frobenius endomorphism of $X.$ In particular, $X$ is elementary if and only if $\Q[\pi_X]=\Q(\pi_X)$ is a field, and this occurs if and only if $f_X$ is a power of an irreducible polynomial in $\Q[t]$ where $f_X$ denotes the characteristic polynomial of $\pi_X.$ \\
	(b) Suppose that $X$ is elementary. Let $h=f_{\Q}^{\pi_X}$ be the minimal polynomial of $\pi_X$ over $\Q$. Further, let $d=[\textrm{End}_k^0(X):\Q(\pi_X)]^{\frac{1}{2}}$ and $e=[\Q(\pi_X):\Q].$ Then $de =2g$ and $f_X = h^d.$ \\
	(c) We have $2g \leq \textrm{dim}_{\Q} \textrm{End}^0_k (X) \leq (2g)^2$ and $X$ is of CM-type. \\
	(d) The following conditions are equivalent: \\
	\indent (d-1) $\textrm{dim}_{\Q} \textrm{End}_k^0(X)=2g$; \\
	\indent (d-2) $\textrm{End}_k^0(X)=\Q[\pi_X]$; \\
	\indent (d-3) $\textrm{End}_k^0(X)$ is commutative; \\
	\indent (d-4) $f_X$ has no multiple root. \\
	(e) The following conditions are equivalent: \\
	\indent (e-1) $\textrm{dim}_{\Q} \textrm{End}_k^0(X)=(2g)^2$; \\
	\indent (e-2) $\Q[\pi_X]=\Q$; \\
	\indent (e-3) $f_X$ is a power of a linear polynomial; \\
	\indent (e-4) $\textrm{End}^0_k(X) \cong M_g(D_{p,\infty})$ where $D_{p,\infty}$ is the unique quaternion algebra over $\Q$ that is ramified at $p$ and $\infty$, and split at all other primes; \\
	\indent (e-5) $X$ is supersingular with $\textrm{End}_k(X) = \textrm{End}_{\overline{k}}(X_{\overline{k}})$ where $X_{\overline{k}}=X \times_k \overline{k}$; \\
	\indent (e-6) $X$ is isogenous to $E^g$ for a supersingular elliptic curve $E$ over $k$ all of whose endomorphisms are defined over $k.$
\end{corollary}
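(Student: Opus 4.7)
The plan is to deduce every part of the corollary from Tate's theorem (Theorem \ref{thm Tate}) together with the observation that $\Gamma = \Gal(\overline{k}/k) \cong \widehat{\Z}$ is topologically generated by the Frobenius, so that its image in $\End(V_l X)$ is precisely the $\Q_l$-subalgebra $\Q_l[\pi_X]$. In particular,
\[
\End_k^0(X) \otimes_\Q \Q_l \;\cong\; \End_\Gamma(V_l X) \;=\; \End_{\Q_l[\pi_X]}(V_l X),
\]
and by Theorem \ref{thm Tate}(a) the module $V_l X$ is semisimple over $\Q_l[\pi_X]$. Combined with Artin--Wedderburn and faithfully flat descent from $\Q_l$ down to $\Q$, these two inputs reduce every assertion to a statement about the factorization of $f_X$ or the image of $\pi_X$.

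For part (a), I would decompose $V_l X = \bigoplus_i W_i^{n_i}$ into isotypic components for $\Q_l[\pi_X]$; Schur's lemma gives $\End_\Gamma(V_l X) \cong \prod_i M_{n_i}(\Q_l[t]/(g_i))$, where $g_i$ is the irreducible polynomial over $\Q_l$ annihilating $W_i$, so the center is $\prod_i \Q_l[t]/(g_i) = \Q_l[\pi_X]$. Descending to $\Q$ yields $Z(\End_k^0(X)) = \Q[\pi_X]$. Since $X$ is elementary iff $\End_k^0(X)$ is simple iff its center is a field, and this in turn corresponds to $V_l X$ having a single isotype, the remaining equivalences with $f_X$ being a power of an irreducible follow. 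Part (b) then comes out by dimension counting on the unique isotype $W^d$ in the elementary case: $W$ is a faithful simple module over the field $\Q(\pi_X)$, hence has $\Q_l$-dimension $e$, so $de = 2g$ and $f_X = h^d$.

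For part (c), the upper bound $\dim_\Q \End_k^0(X) \leq (2g)^2$ is immediate from the Tate isomorphism. The lower bound, and with it the CM-property, I would obtain by applying parts (a)--(b) to each simple $k$-isogeny factor $X_i$: the division algebra $\End_k^0(X_i)$ has center $\Q(\pi_{X_i})$, which is a CM-field of degree $2\dim X_i/d_i$, and Schur index $d_i$, so has $\Q$-dimension $2 d_i \dim X_i \geq 2\dim X_i$, and summing gives the bound together with the CM structure. Part (d) is then a direct corollary: (d-1) $\iff$ (d-2) by dimension comparison using (a); (d-2) $\implies$ (d-3) is obvious; (d-3) $\implies$ (d-2) because commutativity of $\End_{\Q_l[\pi_X]}(V_l X)$ forces every isotype to have multiplicity one, collapsing the endomorphism ring to $\Q_l[\pi_X]$; and (d-2) $\iff$ (d-4) because $\dim_\Q \Q[\pi_X]$ equals the degree of the minimal polynomial of $\pi_X$, which reaches $2g = \deg f_X$ exactly when $f_X$ is squarefree.

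Finally, for part (e), the chain (e-1) $\iff$ (e-2) $\iff$ (e-3) amounts to: $\End_k^0(X) \otimes \Q_l$ fills out the full matrix algebra iff Frobenius acts by a scalar on $V_l X$ iff $f_X$ is a power of a linear polynomial. The main obstacle is (e-4) and its link with supersingularity in (e-5)--(e-6). For these I plan to invoke the Honda--Tate classification (reviewed in \S\ref{thm Honda}): when $\pi_X \in \Q$, one argues that $X$ is isogenous to $E^g$ for an elliptic curve $E$ with Frobenius in $\Q$, and Honda--Tate pins down the local Brauer invariants of $\End_k^0(E)$ to be exactly those of the quaternion algebra $D_{p,\infty}$ ramified at $p$ and $\infty$, forcing $E$ to be supersingular with all endomorphisms defined over $k$. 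This simultaneously yields $\End_k^0(X) \cong M_g(D_{p,\infty})$ and establishes (e-5)--(e-6). The delicate point throughout is the control of the local Brauer invariants, which I would extract from Honda--Tate rather than reconstruct from scratch.
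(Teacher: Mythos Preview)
The paper does not prove this corollary at all: its entire proof is the one-line citation ``For a proof, see \cite[Theorem~2]{11}.'' Your sketch, by contrast, actually reconstructs Tate's original argument from the semisimplicity and bijectivity statements in Theorem~\ref{thm Tate}. In that sense you are doing strictly more than the paper, and your outline is essentially the standard one Tate himself gave: identify $\End_k^0(X)\otimes\Q_l$ with the commutant of $\Q_l[\pi_X]$ in $\End(V_lX)$, decompose $V_lX$ into isotypes, and read everything off from Wedderburn.

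Two points are worth tightening. First, in part (c) you assert that $\Q(\pi_{X_i})$ is a CM-field; this fails when $\pi_{X_i}$ is real (i.e.\ $\pi_{X_i}=\pm\sqrt{q}$), where the center is $\Q$ or $\Q(\sqrt{p})$. Your dimension count $e_id_i=2\dim X_i$ and the CM-type conclusion still go through, since a maximal \'etale subalgebra of the division ring has degree $e_id_i$ over $\Q$ regardless; just drop the adjective. Second, in part (e) you appeal to ``Honda--Tate'' for the local Brauer invariants, but what you actually need is Tate's local invariant formula, recorded in this paper as Proposition~\ref{local inv} and proved in \cite{11} without any input from Honda. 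Citing Honda--Tate here is anachronistic (and a forward reference in the paper's layout), though not logically circular. You should also make explicit the reverse implications (e-5)$\Rightarrow$(e-6)$\Rightarrow$(e-4), which you gesture at but do not state; (e-6)$\Rightarrow$(e-4) is immediate from $\End_k^0(E^g)\cong M_g(\End_k^0(E))$, and (e-5)$\Rightarrow$(e-6) uses that a supersingular abelian variety is $\overline{k}$-isogenous to a power of a supersingular elliptic curve, combined with the hypothesis that all endomorphisms are already defined over $k$.
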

\begin{proof}
	For a proof, see \cite[Theorem 2]{11}.
\end{proof}

For a precise description of the structure of the endomorphism algebra of a simple abelian variety $X$, viewed as a simple algebra over its center $\Q[\pi_X]$, we record the following useful result:
\begin{proposition}\label{local inv}
	Let $X$ be a simple abelian variety over a finite field $k=\F_q.$ Let $K=\Q[\pi_X].$ Then we have: \\
	(a) If $\nu$ is a place of $K$, then the local invariant of $\textrm{End}_k^0(X)$ in the Brauer group $\textrm{Br}(K_{\nu})$ is given by
	\begin{equation*}
	\textrm{inv}_{\nu}(\textrm{End}_k^0(X))=\begin{cases} 0 & \mbox{if $\nu$ is a finite place not above $p$}; \\ \frac{\textrm{ord}_{\nu}(\pi_X)}{\textrm{ord}_{\nu}(q)} \cdot [K_{\nu}:\Q_p] & \mbox{if $\nu$ is a place above $p$}; \\ \frac{1}{2} & \mbox{if $\nu$ is a real place of $K$}; \\ 0 & \mbox{if $\nu$ is a complex place of $K$}. \end{cases}
	\end{equation*}
	(b) If $d$ is the degree of the division algebra $D:=\textrm{End}_k^0(X)$ over its center $K$ (so that $d=[D:K]^{\frac{1}{2}}$ and $f_X = (f_{\Q}^{\pi_X})^d$), then $d$ is the least common denominator of the local invariants $\textrm{inv}_{\nu}(D).$
\end{proposition}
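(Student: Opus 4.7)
The plan is to prove part (a) by working through the local invariants one place at a time, and then deduce part (b) from the Albert--Brauer--Hasse--Noether theorem on central simple algebras over number fields. Write $D = \End_k^0(X)$ and $K = \Q[\pi_X]$, which by Corollary \ref{cor TateEnd0}(a) is the center of $D$.

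First I would handle the finite places $\nu$ of $K$ not above $p$. For any such $\nu$, pick the rational prime $\ell$ below $\nu$, so $\ell \neq \operatorname{char}(k)$. By Theorem \ref{thm Tate}(b), $\Z_\ell \otimes_\Z \End_k(X) \cong \End_\Gamma(T_\ell X)$, and by Theorem \ref{thm Tate}(a) the $\Gamma$-module $V_\ell X$ is semisimple. Decompose $V_\ell X$ under the commutative semisimple $\Q_\ell$-algebra $K \otimes_\Q \Q_\ell \cong \prod_{w \mid \ell} K_w$; on each $K_w$-isotypic component, the commutant is a matrix algebra over $K_w$. Consequently $D \otimes_K K_\nu$ is a matrix algebra, giving $\mathrm{inv}_\nu(D) = 0$. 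For the infinite places, observe that $\pi_X \overline{\pi_X} = q$ by the Weil conjectures, so $K$ is either totally real (when $\pi_X = \pm\sqrt{q}$) or a CM field. Complex places are automatically split; for a real place the Rosati involution on $D \otimes K_\nu$ is a positive involution on a quaternion algebra over $\R$, which forces $D \otimes K_\nu \cong \mathbb{H}$ and $\mathrm{inv}_\nu = 1/2$.

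The main obstacle is the computation at places above $p$, which is the part genuinely beyond Theorem \ref{thm Tate}. Here I would invoke Dieudonné theory: associate to $X$ its covariant (or contravariant) Dieudonné module $M$, a free $W(\bar k)$-module of rank $2g$ equipped with a Frobenius-semilinear $F$ satisfying $FV = VF = p$. The isocrystal $M[1/p]$ decomposes by Dieudonné--Manin into isotypic pieces $\bigoplus_\lambda M_\lambda$ indexed by slopes $\lambda \in \Q_{\geq 0}$. The slopes of $X$ are read from the $p$-adic valuations of the roots of the characteristic polynomial $f_X$ of $\pi_X$, and these slopes correspond exactly to the places $\nu \mid p$ of $K$ via $\lambda(\nu) = \mathrm{ord}_\nu(\pi_X)/\mathrm{ord}_\nu(q)$. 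The commutant of $\pi_X$ on each piece $M_\lambda$ is a central simple algebra over $K_\nu$ whose Hasse invariant is governed by the slope, and a standard computation with the skew field $\End(\text{simple isocrystal of slope }\lambda)$, whose invariant over $\Q_p$ is $\lambda \pmod \Z$, yields invariant $\lambda(\nu) \cdot [K_\nu : \Q_p] \pmod \Z$ after base change from $\Q_p$ to $K_\nu$. This gives the stated formula.

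Finally, part (b) is essentially formal. The Albert--Brauer--Hasse--Noether theorem states that for a central simple algebra $D$ of degree $d$ over a number field $K$, the local invariants $\mathrm{inv}_\nu(D) \in \Q/\Z$ satisfy $\sum_\nu \mathrm{inv}_\nu(D) = 0$, and $d$ equals the least common denominator of the invariants $\mathrm{inv}_\nu(D)$ as $\nu$ runs over the places of $K$. Applying this to $D = \End_k^0(X)$ with the invariants computed in (a) yields the claim, and the relation $f_X = (f_\Q^{\pi_X})^d$ is then a restatement of the identity $de = 2g$ from Corollary \ref{cor TateEnd0}(b).
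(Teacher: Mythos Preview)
The paper does not actually prove this proposition; it simply refers to \cite[Corollary~16.30]{2} for (a) and \cite[Corollary~16.32]{2} for (b). Your outline is the classical argument behind those references---splitting at $\ell\ne p$ via Theorem~\ref{thm Tate}, Dieudonn\'e--Manin at the places above $p$, and the period--index theorem over global fields for (b)---so in that sense you are supplying what the paper omits, and the overall architecture is sound.

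That said, your treatment of the real places has a genuine gap. You write that the Rosati involution gives ``a positive involution on a quaternion algebra over $\R$, which forces $D\otimes K_\nu\cong\mathbb{H}$''. Neither clause is justified. First, you have not shown $d=[D:K]^{1/2}=2$ when $K$ is totally real; a priori $D\otimes_K\R$ is $M_d(\R)$ or $M_{d/2}(\mathbb H)$ for some $d\mid 2g$. Second, even granting $d=2$, positivity alone does \emph{not} exclude $M_2(\R)$: transposition $x\mapsto x^{T}$ is a positive involution on $M_2(\R)$, so both Type~II and Type~III in Albert's list admit positive involutions of the first kind. One clean fix, in the spirit of the paper, is to invoke the numerical constraints of Table~1: if $K$ is totally real then $de=2g$; Type~I would give $e=2g\mid g$ and Type~II would give $2e=2g\mid g$, both absurd, so $D$ is of Type~III and $D\otimes_{K,\sigma}\R\cong\mathbb H$ at every real embedding $\sigma$. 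Alternatively, note that $\pi_X^2=q$ in the totally real case, so over $\F_{q^2}$ Corollary~\ref{cor TateEnd0}(e) gives $\End^0_{\F_{q^2}}(X)\cong M_g(D_{p,\infty})$; then $D$ is the centralizer of $K=\Q[\pi_X]$ there, hence Brauer-equivalent to $D_{p,\infty}\otimes_{\Q}K$, whose invariant at each real place is $\tfrac12$.
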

\begin{proof}
	(a) For a proof, see \cite[Corollary 16.30]{2}. \\
	(b) For a proof, see \cite[Corollary 16.32]{2}.
\end{proof}

\subsection{Abelian varieties up to isogeny and Weil numbers: Honda-Tate theory}\label{thm Honda}
In this section, we recall an important theorem of Honda and Tate. Throughout this section, let $q=p^a$ for some prime $p$ and an integer $a \geq 1.$ To achieve our goal, we first give the following
\begin{definition}\label{qWeil Def}
	(a) A \emph{$q$-Weil number} is an algebraic integer $\pi$ such that $| \iota(\pi) | = \sqrt{q}$ for all embeddings $\iota : \Q[\pi] \hookrightarrow \C.$ \\
	(b) Two $q$-Weil numbers $\pi$ and $\pi^{\prime}$ are said to be \emph{conjugate} if they have the same minimal polynomial over $\Q,$ or equivalently, there is an isomorphism $\Q[\pi] \rightarrow \Q[\pi^{\prime}]$ sending $\pi$ to $\pi^{\prime}.$
\end{definition}

Regarding $q$-Weil numbers, the following facts are well-known:
\begin{remark}\label{qWeil rem}
	Let $X$ and $Y$ be simple abelian varieties over a finite field $k=\F_q.$ Then we have \\
	(i) The Frobenius endomorphism $\pi_X$ is a $q$-Weil number. \\
	(ii) $X$ and $Y$ are $k$-isogenous if and only if $\pi_X$ and $\pi_Y$ are conjugate.
\end{remark}

Now, we introduce our main result of this section:

\begin{theorem}\label{thm HondaTata}
	For every $q$-Weil number $\pi$, there exists a simple abelian variety $X$ over $\F_q$ such that $\pi_X$ is conjugate to $\pi$. Moreover, we have a bijection between the set of isogeny classes of simple abelian varieties over $\F_q$ and the set of conjugacy classes of $q$-Weil numbers given by $X \mapsto \pi_X$.
\end{theorem}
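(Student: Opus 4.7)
The plan is to split the theorem into three pieces: that the map $X \mapsto \pi_X$ is well-defined on isogeny classes, that it is injective on isogeny classes, and that it is surjective. The first two are consequences of Tate's theorem (Theorem \ref{thm Tate}) together with Proposition \ref{local inv}, while the third is the content of Honda's original contribution and constitutes the real obstacle.

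For well-definedness and injectivity, recall that the characteristic polynomial $f_X$ of Frobenius is an isogeny invariant, so $k$-isogenous simple varieties give the same minimal polynomial of Frobenius and hence conjugate Weil numbers. Conversely, suppose $\pi_X$ and $\pi_Y$ are conjugate for simple $X, Y$ over $\F_q$, so their minimal polynomials $h_X, h_Y$ coincide. By Corollary \ref{cor TateEnd0}(b), we have $f_X = h_X^{d_X}$ and $f_Y = h_Y^{d_Y}$, and the exponents $d_X, d_Y$ are determined through Proposition \ref{local inv}(b) by the local invariants $\textrm{inv}_{\nu}$ of the endomorphism algebras at the places of $\Q(\pi_X) = \Q(\pi_Y)$. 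Since those invariants depend only on the Weil number, $d_X = d_Y$ and hence $f_X = f_Y$. Theorem \ref{thm Tate}(a)(b) then makes $V_\ell X$ and $V_\ell Y$ isomorphic semisimple Galois representations with the same Frobenius characteristic polynomial, and Tate's isogeny theorem lifts this to a $k$-isogeny $X \to Y$.

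For surjectivity, given a $q$-Weil number $\pi$, one first checks that $K := \Q(\pi)$ is either $\Q$, a real quadratic field of the form $\Q(\sqrt{p})$ (in the degenerate case $\pi = \pm\sqrt{q}$ with $a$ odd), or a CM field. This is forced by the relation $|\iota(\pi)| = \sqrt{q}$ for every embedding $\iota$, which yields $\pi\bar\pi = q$ compatibly with the Galois action on $K$ and thereby equips $K$ with a canonical totally imaginary structure over a totally real subfield. One then embeds $K$ into a CM field $L$ and selects a CM type $\Phi$ on $L$ whose reflex datum matches the $p$-adic data $\textrm{ord}_\nu(\pi)/\textrm{ord}_\nu(q)$ encoded in $\pi$ by the Shimura--Taniyama formula. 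Constructing a CM abelian variety $A$ of type $(L,\Phi)$ over a number field, taking its reduction $A_0$ at a suitable prime $\mathfrak{p} \mid p$, and extracting a simple isogeny factor of $A_0$ over some $\F_{q^n}$ produces a simple abelian variety whose Frobenius is conjugate to a power of $\pi$; a final descent argument, exploiting the freedom to replace $\pi$ by a Galois conjugate within its conjugacy class, then yields the desired simple variety over $\F_q$.

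The main obstacle is the Shimura--Taniyama step: building the CM type $\Phi$ so that the Frobenius of $A_0$ matches $\pi$ (up to conjugation) rather than some other $q$-Weil number in its orbit requires the main theorem of complex multiplication together with a careful analysis of how $p$ decomposes in $L$ and a comparison of the local invariants $\textrm{inv}_\nu(\End^0 A_0)$ with those forced on $\End^0 X$ by Proposition \ref{local inv}(a). Once this compatibility is secured, everything else is bookkeeping with the tools already assembled in the preliminary subsections.
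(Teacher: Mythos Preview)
The paper does not actually prove this theorem: its ``proof'' consists solely of the citation ``For a proof, see \cite[Main Theorem]{4} or \cite[\S16.5]{2}.'' You have instead supplied a genuine outline of the classical Honda--Tate argument, so there is nothing in the paper to compare against beyond observing that your sketch is precisely the kind of argument those references contain.

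Your sketch is broadly correct and follows the standard route. The injectivity argument via Tate is fine: semisimplicity of $V_\ell$ plus equality of Frobenius characteristic polynomials gives an isomorphism of Galois modules, and Tate's isomorphism (Theorem~\ref{thm Tate}(b)) converts this into an isogeny. For surjectivity you have identified the right ingredients---CM lifting, Shimura--Taniyama, reduction mod~$\mathfrak{p}$---but you underplay the final descent step. What one actually obtains from the CM construction is a simple abelian variety over some $\F_{q^n}$ whose Frobenius is conjugate to $\pi^n$; passing from this back to a variety over $\F_q$ with Frobenius conjugate to $\pi$ itself is not just a matter of ``freedom to replace $\pi$ by a Galois conjugate.'' In Honda's original treatment this requires a Weil-restriction-of-scalars argument together with a check that a suitable isogeny factor of $\mathrm{Res}_{\F_{q^n}/\F_q}(A_0)$ has the correct Frobenius, and in Tate's Bourbaki exposition it is handled via a separate lemma on effectivity of Weil numbers under base change. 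If you intend this as more than a pointer to the literature, that descent is the place where your outline is thinnest.
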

The inverse of the map $X \mapsto \pi_X$ associates to a $q$-Weil number $\pi$ a simple abelian variety $X$ over $\F_q$ such that $f_X$ is a power of the minimal polynomial $f_{\Q}^{\pi}$ of $\pi$ over $\Q.$
\begin{proof}
	For a proof, see \cite[Main Theorem]{4} or \cite[\S16.5]{2}.
\end{proof}

\subsection{Isomorphism classes contained in an isogeny class}\label{thm waterhouse}
In this section, we will give a useful result of Waterhouse~\cite{12}. Throughout this section, let $k=\F_q$ with $q=p^a$ for some prime $p$ and an integer $a \geq 1.$ \\
\indent Let $X$ be an abelian variety over $k.$ Then $\textrm{End}_k(X)$ is a $\Z$-order in $\textrm{End}_k^0(X)$ containing $\pi_X$ and $q/\pi_X.$ If a ring $R$ is the endomorphism ring of an abelian variety, then we may consider a left ideal of $R$, and give the following
\begin{definition}\label{Roccur}
	Let $X$ be an abelian variety over $k$ with $R:=\textrm{End}_k(X),$ and let $I$ be a left ideal of $R$, which contains an isogeny. \\
	(a) We define $H(X,I)$ to be the intersection of the kernels of all elements of $I$. This is a finite subgroup scheme of $X.$ \\
	(b) We define $X_I$ to be the quotient of $X$ by $H(X,I)$ i.e.\ $X_I=X/H(X,I).$ This is an abelian variety over $k$ that is $k$-isogenous to $X.$
\end{definition}

Now, we introduce our main result of this section, which plays an important role:
\begin{proposition}\label{endposs}
	Let $X$ be an abelian variety over $k$ with $R:=\textrm{End}_k(X)$, and let $I$ be a left ideal of $R$, which contains an isogeny. Also, let $D=\textrm{End}_k^0(X).$ Then we have: \\
	(a) $\textrm{End}_k(X_I)$ contains $O_r (I):=\{x \in \textrm{End}_k^0(X)~|~I x \subseteq I\}$, the right order of $I,$ and equals it if $I$ is a kernel ideal\footnote{Recall that $I$ is a kernel ideal of $R$ if $I=\{\alpha \in R~|~\alpha H(X,I)=0 \}$.}. \\
	(b) Every maximal order in $D$ occurs as the endomorphism ring of an abelian variety in the isogeny class of $X.$
\end{proposition}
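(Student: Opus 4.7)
The plan is to prove (a) directly from the universal property of the quotient isogeny $\phi \colon X \to X_I = X/H(X,I)$, and then derive (b) by combining (a) with a kernel-ideal saturation procedure on left ideals of $R$.

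For (a), first observe that $\phi$, being an isogeny, identifies $D = \textrm{End}_k^0(X)$ with $\textrm{End}_k^0(X_I)$, so both $R$ and $\textrm{End}_k(X_I)$ sit as $\Z$-orders inside this common division algebra. Take $x \in O_r(I)$, assuming first that $x \in R$. For a geometric point $P$ of $H(X,I)$ and any $\alpha \in I$, the hypothesis $\alpha x \in I$ together with the defining property $H(X,I) = \bigcap_{\beta \in I} \ker \beta$ yields $(\alpha x)(P)=0$; this forces $x(P) \in H(X,I)$, so $x$ stabilises $\ker \phi$ and descends to an endomorphism of $X_I$ matching $x$ under the above identification. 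For general $x \in O_r(I) \subseteq D$, realise $X_I$ concretely as the image of the map $X \to X^n$, $a \mapsto (\alpha_i(a))_i$, for generators $\alpha_1, \ldots, \alpha_n$ of $I$, and exploit the matrix over $R$ encoding the relations $\alpha_i x = \sum_j r_{ji}\alpha_j$ forced by $Ix \subseteq I$ to produce the required integral endomorphism of $X_I$. For the asserted equality when $I$ is a kernel ideal, reverse the argument: any $y \in \textrm{End}_k(X_I)$ lifts to $\tilde y \in D$ that preserves $H(X,I)$; then for each $\alpha \in I$ the element $\alpha \tilde y$ annihilates $H(X,I)$, and the kernel-ideal hypothesis places $\alpha \tilde y$ back in $I$, giving $I \tilde y \subseteq I$ and hence $\tilde y \in O_r(I)$.

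For (b), given a maximal order $M$ in $D$, I would first invoke the standard fact from the arithmetic of orders in semisimple algebras that $M = O_r(J)$ for some left $R$-ideal $J$ containing an isogeny (this relies on $M$ and $R$ agreeing after tensoring with $\Q$ and differing only at finitely many primes, so a local-global gluing supplies $J$). Define the saturation $J^{\ast} := \{\alpha \in R \mid \alpha \text{ annihilates } H(X,J)\}$, which is a kernel ideal by construction, satisfies $J \subseteq J^{\ast}$, and obeys $H(X,J^{\ast}) = H(X,J)$ (one inclusion is automatic from $J \subseteq J^{\ast}$, the other from the defining condition on $J^{\ast}$). The inclusion $J \subseteq J^{\ast}$ yields $M = O_r(J) \subseteq O_r(J^{\ast})$, and maximality of $M$ forces equality; part (a) then produces $\textrm{End}_k(X_{J^{\ast}}) = O_r(J^{\ast}) = M$.

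The main technical obstacle will be making rigorous the correspondence between the integral order $\textrm{End}_k(X_I)$ and the subset $O_r(I) \subseteq D$ when $x \in O_r(I)$ carries genuine denominators: the naive pointwise argument on $\overline{k}$-points applies literally only to $x \in R$, and passing to arbitrary $x \in D$ requires either the explicit $X^n$-realisation sketched above, or more conceptually an $\ell$-adic argument detecting integrality on Tate modules via Theorem \ref{thm Tate}(b). A subsidiary point is the existence step in (b), where one must produce a left $R$-ideal with prescribed right order $M$ before saturating; this is a general lemma about $\Z$-orders in central simple algebras rather than anything specific to abelian varieties.
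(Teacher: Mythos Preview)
The paper does not actually prove this proposition: it simply refers to \cite[Lemma 16.56]{2} or \cite[Proposition 3.9]{12} for (a) and to \cite[Theorem 3.13]{12} for (b). Your outline is essentially Waterhouse's original argument, so in that sense you are reconstructing the cited proof rather than deviating from it.

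Your sketch for (a) is sound. The realisation of $X_I$ as the image of $(\alpha_1,\dots,\alpha_n)\colon X\to X^n$ is exactly how one handles $x\in O_r(I)$ with denominators, and the reverse inclusion in the kernel-ideal case is correct once phrased carefully: the statement ``$\tilde y$ preserves $H(X,I)$'' is imprecise because $\tilde y$ need not lie in $R$, but what you actually use is that each $\alpha\in I$ factors as $\alpha'\circ\phi$ with $\alpha'\colon X_I\to X$, so $\alpha\tilde y=\alpha' y\,\phi$ is an honest element of $R$ killing $\ker\phi=H(X,I)$.

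There is, however, a genuine gap in your argument for (b). The implication ``$J\subseteq J^{\ast}$ yields $O_r(J)\subseteq O_r(J^{\ast})$'' is not valid: for arbitrary left ideals $J\subseteq J'$ in an order there is no containment between $O_r(J)$ and $O_r(J')$ in either direction. (From $Jx\subseteq J$ you only get $Jx\subseteq J^{\ast}$, not $J^{\ast}x\subseteq J^{\ast}$.) The conclusion you want is nonetheless true, but for a different reason: since $H(X,J)=H(X,J^{\ast})$ you have $X_J=X_{J^{\ast}}$, and then part (a) gives $O_r(J^{\ast})=\textrm{End}_k(X_{J^{\ast}})=\textrm{End}_k(X_J)\supseteq O_r(J)=M$. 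In fact the whole saturation step is unnecessary: already $\textrm{End}_k(X_J)\supseteq O_r(J)=M$ by the first half of (a), and since $\textrm{End}_k(X_J)$ is an order in $D$ while $M$ is maximal, equality is forced. So you can drop $J^{\ast}$ entirely and finish (b) in one line after producing $J$ with $O_r(J)=M$.
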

\begin{proof}
	(a) For a proof, see \cite[Lemma 16.56]{2} or \cite[Proposition 3.9]{12}. \\
	(b) For a proof, see \cite[Theorem 3.13]{12}.
\end{proof}

\section{Main Result}\label{main}
Throughout this section, let $\mathcal{D}$ be a quaternion division algebra over an algebraic number field $K$ with $[K:\Q] \leq 2$, unless otherwise specified. Then the multiplicative subgroup $\mathcal{D}^{\times}$ of $\mathcal{D}$ is infinite, containing $K^{\times}$. Let $\textrm{Ram}(\mathcal{D})$ be the set of all primes of $K$ at which $\mathcal{D}$ is ramified. Then the following is fundamental: 
\begin{remark}\label{fund rem}
Let $\mathcal{D}$ and $\mathcal{D}^{\prime}$ be quaternion division algebras over an algebraic number field $K$ with $[K:\Q] \leq 2.$ Then we have:\\
(i) The set $\textrm{Ram}(\mathcal{D})$ is finite and the cardinality of $\textrm{Ram}(\mathcal{D})$ is even and positive. \\
(ii) $\mathcal{D} \cong \mathcal{D}^{\prime}$ if and only if $\textrm{Ram}(\mathcal{D})=\textrm{Ram}(\mathcal{D}^{\prime}).$
\end{remark}
\begin{example}
Let $p \geq 2$ be a prime. In the sequel, we let $D_{p,\infty}$ denote the unique quaternion division algebra over $\Q$ which is ramified precisely at the primes $p$ and $\infty.$ In other words, we have $\textrm{Ram}(D_{p,\infty})=\{p, \infty\}.$ In particular, if $p$ and $p^{\prime}$ are two distinct rational primes, then $D_{p,\infty}$ is not isomorphic to $D_{p^{\prime},\infty}.$
\end{example}

Now, we consider the case when $K=\Q$, in more details. 
\begin{lemma}\label{over Q lem}
Let $\mathcal{D}$ be a quaternion division algebra over $\Q.$ Then the group $\mathcal{D}^{\times}$ is Jordan, and we have $J_{\mathcal{D}^{\times}} \leq 60.$
\end{lemma}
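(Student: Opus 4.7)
The plan is to reduce the analysis of finite subgroups of $\mathcal{D}^{\times}$ to finite subgroups of a real Lie group via completion at the archimedean place. Since $\mathcal{D}$ is a quaternion algebra over $\Q$, the real completion $\mathcal{D} \otimes_{\Q} \R$ is isomorphic either to the Hamilton quaternion algebra $\mathbb{H}$ (when $\infty \in \textrm{Ram}(\mathcal{D})$) or to $M_{2}(\R)$ (when $\infty \notin \textrm{Ram}(\mathcal{D})$). The inclusion $\mathcal{D}^{\times} \hookrightarrow (\mathcal{D} \otimes_{\Q} \R)^{\times}$ therefore realises every finite subgroup $H \leq \mathcal{D}^{\times}$ as a finite subgroup of either $\mathbb{H}^{\times}$ or $\textrm{GL}_{2}(\R)$, and it will suffice to bound $J_{H}$ uniformly in each of these two ambient groups.

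In the split case $\mathcal{D} \otimes_{\Q} \R \cong M_{2}(\R)$, a standard averaging argument shows that any finite subgroup of $\textrm{GL}_{2}(\R)$ is conjugate to a subgroup of the orthogonal group $\mathrm{O}(2)$, and is therefore cyclic or dihedral. In either event there is a cyclic normal subgroup of index at most $2$, so $J_{H} \leq 2$.

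For the ramified case, I would first note that the map $q \mapsto q/|q|$ embeds any finite subgroup of $\mathbb{H}^{\times}$ into the unit quaternion group $S^{3}$, because its kernel on $\mathbb{H}^{\times}$ is $\R^{>0}$, which has no nontrivial finite subgroup. The finite subgroups of $S^{3}$ are then classified, via the double cover $S^{3} \to \mathrm{SO}(3)$ and the classical list of finite rotation groups, as the cyclic groups $C_{n}$, the binary dihedral groups $2D_{n}$, the binary tetrahedral group $2T$ (of order $24$), the binary octahedral group $2O$ (of order $48$), and the binary icosahedral group $2I$ (of order $120$). A direct inspection in each family shows that some abelian normal subgroup has index at most $60$: for $C_{n}$ the index is $1$; for $2D_{n}$ the cyclic subgroup of index $2$ works; for $2T$ and $2O$ the centre $\{\pm 1\}$ is the largest abelian normal subgroup, with indices $12$ and $24$ respectively; and the extremal case is $2I$, whose centre is $\{\pm 1\}$ and whose quotient $2I/\{\pm 1\} \cong A_{5}$ is simple non-abelian, forcing the centre to be the largest abelian normal subgroup and giving index exactly $60$.

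Combining the two cases, every finite subgroup of $\mathcal{D}^{\times}$ admits a normal abelian subgroup of index at most $60$; by Definition \ref{Jordan} this at once shows that $\mathcal{D}^{\times}$ is Jordan with $J_{\mathcal{D}^{\times}} \leq 60$ (equivalently, $\sup_{H} J_{H} \leq 60$, compatibly with Lemma \ref{fund lem}). The only non-routine ingredient in this plan is the classification of finite subgroups of $S^{3}$ together with the identification of their maximal abelian normal subgroups; these are entirely classical, and the bound $60$ is forced by the binary icosahedral group alone.
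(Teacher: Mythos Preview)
Your argument is correct. The paper's proof, however, is a one-liner of a different flavour: it simply observes that $\mathcal{D}\otimes_{\Q}\C\cong M_{2}(\C)$, so $\mathcal{D}^{\times}$ embeds in $\textrm{GL}_{2}(\C)$, and then invokes the known value $J_{\textrm{GL}_{2}(\C)}=60$ from Popov together with the trivial monotonicity $J_{\mathcal{D}^{\times}}\leq J_{\textrm{GL}_{2}(\C)}$. There is no case analysis and no classification of finite subgroups in the paper's version.

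By contrast, you tensor with $\R$ rather than $\C$, split into the definite and indefinite cases, and in each case classify the finite subgroups directly (via $O(2)$ in the split case, via $S^{3}$ and the binary polyhedral groups in the ramified case). This is more work but is essentially self-contained: you do not need to import the exact value of $J_{\textrm{GL}_{2}(\C)}$, only the classical lists of finite subgroups of $O(2)$ and $S^{3}$. Your route also yields finer information along the way --- in particular you see immediately that when $\mathcal{D}$ is indefinite every finite subgroup has an abelian normal subgroup of index at most $2$, a fact the paper only recovers later (in Theorem~\ref{over Q main thm}) by a separate argument. The trade-off is brevity: the paper's proof is two sentences because it outsources the hard step to the known computation of $J_{\textrm{GL}_{2}(\C)}$.
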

\begin{proof}
Since $\mathcal{D} \otimes_{\Q} \C \cong M_2(\C)$ so that $\mathcal{D}^{\times} \leq \textrm{GL}_2 (\C)$, it follows from \cite[Theorems 1 and 3-(1)]{Popov} that $\mathcal{D}^{\times}$ is Jordan and $J_{\mathcal{D}^{\times}} \leq J_{\textrm{GL}_2(\C)}=60$.
\end{proof}

Next, we look into the case when $[K:\Q] = 2.$ If $K$ is assumed to be totally real, then we have

\begin{lemma}\label{real quad lem}
Let $\mathcal{D}$ be a quaternion division algebra over a real quadratic field $K.$ Then the group $J_{\mathcal{D}^{\times}}$ is Jordan, and we have $J_{\mathcal{D}^{\times}} \leq 3600.$
\end{lemma}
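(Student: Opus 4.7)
The plan is to mimic the argument of Lemma \ref{over Q lem}, adapting to the fact that a real quadratic field has two archimedean places instead of one. Since $K$ is totally real of degree $2$, we have $K \otimes_{\Q} \R \cong \R \oplus \R$, hence $K \otimes_{\Q} \C \cong \C \oplus \C$. Using associativity of tensor products over $K \subseteq K \otimes_{\Q} \C$, together with the fact that $\C$ splits any quaternion algebra, we get
\[
\mathcal{D} \otimes_{\Q} \C \;\cong\; \mathcal{D} \otimes_K (\C \oplus \C) \;\cong\; M_2(\C) \oplus M_2(\C),
\]
which yields an embedding $\mathcal{D}^{\times} \hookrightarrow \GL_2(\C) \times \GL_2(\C)$. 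In particular every finite subgroup of $\mathcal{D}^{\times}$ lives inside $\GL_2(\C) \times \GL_2(\C)$.

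Next, I would establish the general product estimate $J_{G_1 \times G_2} \leq J_{G_1} \cdot J_{G_2}$ for Jordan groups $G_1, G_2$. Given a finite subgroup $H \leq G_1 \times G_2$, let $H_i$ denote its projection onto $G_i$ and choose an abelian normal subgroup $A_i \trianglelefteq H_i$ of index at most $J_{G_i}$. Then $A_1 \times A_2$ is abelian and normal in $H_1 \times H_2$, and since $H \subseteq H_1 \times H_2$, the intersection $H \cap (A_1 \times A_2)$ is an abelian normal subgroup of $H$ of index at most $J_{G_1} \cdot J_{G_2}$. Combined with Popov's bound $J_{\GL_2(\C)} = 60$ used in Lemma \ref{over Q lem}, this gives $J_{\GL_2(\C) \times \GL_2(\C)} \leq 3600$, and then Lemma \ref{fund lem} yields $J_{\mathcal{D}^{\times}} \leq 3600$.

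The main subtlety, though not deep, is the product-of-Jordan-groups estimate: one cannot simply take $A_1 \times A_2$ as the required subgroup of $H$, because $H$ may be a proper subgroup of $H_1 \times H_2$, so intersecting with $A_1 \times A_2$ is essential to stay inside $H$. The bound $3600$ is certainly not sharp (cf.\ Theorem \ref{main theorem1}, where the sharp constants in the quadratic case are much smaller); improving it will require exploiting ramification data of $\mathcal{D}$ and controlling which finite subgroups of $\GL_2(\C) \times \GL_2(\C)$ actually embed in $\mathcal{D}^{\times}$. That refinement, however, is not needed for the lemma as stated.
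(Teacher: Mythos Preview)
Your proof is correct and follows essentially the same route as the paper: embed $\mathcal{D}^{\times}$ into $\GL_2(\C)\times\GL_2(\C)$ via $\mathcal{D}\otimes_{\Q}\C\cong M_2(\C)\oplus M_2(\C)$, and then bound by $J_{\GL_2(\C)}^2=3600$. The only difference is that the paper quotes Popov's product inequality $J_{G_1\times G_2}\le J_{G_1}J_{G_2}$ (his Theorem~3-(2)), whereas you supply the short projection-and-intersect argument yourself; your version is self-contained and the index computation is fine.
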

\begin{proof}
Note that if $\mathcal{D}$ is totally definite (resp.\ indefinite), then we have $\mathcal{D} \otimes_{\Q} \mathbb{R} \cong \mathbb{H} \oplus \mathbb{H}$ (resp.\ $\mathcal{D} \otimes_{\Q} \mathbb{R} \cong M_2 (\mathbb{R}) \oplus M_2(\mathbb{R}))$ so that $\mathcal{D} \otimes_{\Q} \C \cong M_2 (\C) \oplus M_2 (\C)$ in both cases. 
Then since $\mathcal{D}^{\times} \leq \textrm{GL}_2(\C) \times \textrm{GL}_2(\C),$ it follows from \cite[Theorems 1 and 3-(1),(2)]{Popov} that $\mathcal{D}^{\times}$ is Jordan and $J_{\mathcal{D}^{\times}} \leq J_{\textrm{GL}_2(\C)}^2 = 3600.$ 
\end{proof}

Similarly,
\begin{lemma}\label{imag quad lem}
Let $\mathcal{D}$ be a quaternion division algebra over an imaginary quadratic field $K.$ Then the group $J_{\mathcal{D}^{\times}}$ is Jordan, and we have $J_{\mathcal{D}^{\times}} \leq 60.$
\end{lemma}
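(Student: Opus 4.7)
The plan is to mimic the arguments of Lemmas \ref{over Q lem} and \ref{real quad lem}, exploiting that an imaginary quadratic field has a single archimedean place, which happens to be complex. First I would compute the real completion of $\mathcal{D}$: writing $\mathcal{D} \otimes_{\Q} \R \cong \mathcal{D} \otimes_{K} (K \otimes_{\Q} \R)$ and using $K \otimes_{\Q} \R \cong \C$ (since $K$ is imaginary quadratic with one complex place), we get $\mathcal{D} \otimes_{\Q} \R \cong \mathcal{D} \otimes_{K} \C$. Since $\C$ is algebraically closed, any central simple $\C$-algebra of dimension $4$ is split, so $\mathcal{D} \otimes_{K} \C \cong M_2(\C)$.

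Once this identification is in place, I would embed $\mathcal{D}^{\times} \hookrightarrow (\mathcal{D} \otimes_{\Q} \R)^{\times} \cong \mathrm{GL}_2(\C)$, and invoke the classical Jordan theorem together with Popov's explicit computation, namely \cite[Theorems 1 and 3-(1)]{Popov}, which gives that $\mathrm{GL}_2(\C)$ is a Jordan group with $J_{\mathrm{GL}_2(\C)} = 60$. Since the Jordan property passes to subgroups with no increase in the Jordan constant (as spelled out in the proof of Lemma \ref{fund lem}), the conclusion $J_{\mathcal{D}^{\times}} \leq 60$ follows.

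There is no real obstacle here; the only subtlety compared with the real quadratic case is that we land inside a single copy of $\mathrm{GL}_2(\C)$ rather than a product of two copies, which is actually what brings the bound down from $3600$ to $60$. I expect the proof to be essentially two lines long, parallel to the proof of Lemma \ref{over Q lem}.
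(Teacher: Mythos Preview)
Your proposal is correct and matches the paper's proof essentially line for line: the paper also notes $\mathcal{D} \otimes_{\Q} \R \cong M_2(\C)$, embeds $\mathcal{D}^{\times} \leq \mathrm{GL}_2(\C)$, and invokes \cite[Theorems 1 and 3-(1)]{Popov} to conclude $J_{\mathcal{D}^{\times}} \leq J_{\mathrm{GL}_2(\C)} = 60$. Your added justification for why $\mathcal{D} \otimes_{\Q} \R \cong M_2(\C)$ (via $K \otimes_{\Q} \R \cong \C$ and splitting over the algebraically closed field) simply makes explicit what the paper states without elaboration.
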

\begin{proof}
Note that we have $\mathcal{D} \otimes_{\Q} \R \cong M_2 (\C)$, and then, since $\mathcal{D}^{\times} \leq \textrm{GL}_2(\C),$ it follows from \cite[Theorems 1 and 3-(1)]{Popov} that $\mathcal{D}^{\times}$ is Jordan, and $J_{\mathcal{D}^{\times}} \leq J_{\textrm{GL}_2(\C)} =60.$
\end{proof}



Now, we would like to describe the exact value of $J_{\mathcal{D}^{\times}}$, and see that all the upper bounds in Lemmas \ref{over Q lem}, \ref{real quad lem}, and \ref{imag quad lem} are not sharp. To this aim, we first introduce two lemmas:
\begin{lemma}\label{dic lem}
For any integer $n \geq 2,$ we have 
\begin{equation*}
J_{\textrm{Dic}_{4n}}=2
\end{equation*}
where $\textrm{Dic}_{4n}$ denotes the dicyclic group of order $4n.$ 
\end{lemma}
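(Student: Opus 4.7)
The plan is a direct group-theoretic analysis using the standard presentation
\[
\operatorname{Dic}_{4n} = \langle a, b \mid a^{2n} = 1,\ b^2 = a^n,\ bab^{-1} = a^{-1} \rangle,
\]
together with the distinguished cyclic subgroup $A := \langle a \rangle$ of order $2n$ and index $2$ in $\operatorname{Dic}_{4n}$. Essentially all information needed for the computation is packaged in the pair $(\operatorname{Dic}_{4n}, A)$.

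For the lower bound $J_{\operatorname{Dic}_{4n}} \geq 2$, I would first record the elementary fact that a finite group $G$ has $J_G = 1$ if and only if $G$ is abelian: indeed, $G$ is one of its own finite subgroups, so $J_G=1$ forces $G$ to contain a normal abelian subgroup of index~$1$, i.e.\ $G$ itself must be abelian. For $n \geq 2$ the relation $bab^{-1} = a^{-1}$ is nontrivial (because $a$ has order $2n \geq 4$), so $\operatorname{Dic}_{4n}$ is non-abelian and therefore $J_{\operatorname{Dic}_{4n}} \geq 2$.

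For the upper bound $J_{\operatorname{Dic}_{4n}} \leq 2$, I would argue uniformly on an arbitrary subgroup $H \leq \operatorname{Dic}_{4n}$ that $H$ contains an abelian normal subgroup of $H$-index at most $2$. Put $H_0 := H \cap A$. Being a subgroup of the cyclic group $A$, it is itself cyclic, hence abelian. By the second isomorphism theorem,
\[
[H : H_0] \;=\; [HA : A] \;\Big\vert\; [\operatorname{Dic}_{4n} : A] \;=\; 2,
\]
so $[H : H_0] \in \{1,2\}$. Since any subgroup of index at most $2$ is automatically normal, $H_0$ is an abelian normal subgroup of $H$ of index at most~$2$, as required. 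Combining the two bounds gives $J_{\operatorname{Dic}_{4n}} = 2$.

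There is no substantive obstacle in the argument. The only subtlety worth flagging is that the normality condition in Definition~\ref{Jordan} is with respect to each subgroup $H$ separately rather than to the ambient group, which is precisely why the automatic normality of index-$2$ subgroups makes the proof essentially a one-line application of the second isomorphism theorem.
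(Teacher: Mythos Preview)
Your proof is correct. Both bounds are established cleanly: non-abelianness for $n\geq 2$ gives $J_{\operatorname{Dic}_{4n}}\geq 2$, and intersecting an arbitrary subgroup $H$ with the index-$2$ cyclic subgroup $A=\langle a\rangle$ produces an abelian normal subgroup of $H$ of index at most $2$, giving $J_{\operatorname{Dic}_{4n}}\leq 2$.

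Your route is genuinely different from, and more elementary than, the paper's. The paper first invokes the faithful $2$-dimensional symplectic representation of $\operatorname{Dic}_{4n}$ to deduce that every subgroup $H$ is either cyclic or itself dicyclic, and then handles each case separately (cyclic subgroups are abelian; dicyclic subgroups have a cyclic normal subgroup of index $2$). You bypass this subgroup classification entirely by working uniformly with $H\cap A$ and the second isomorphism theorem. What your argument buys is economy: it needs no representation theory and no case split, and it would apply verbatim to any finite group possessing an abelian subgroup of index $2$. What the paper's argument buys is a structural description of all subgroups of $\operatorname{Dic}_{4n}$, which, while unnecessary for this lemma, is in the spirit of the subgroup classifications used elsewhere in the paper (e.g.\ Lemma~\ref{binary lem} and the appeals to \cite{Hwang(2018)} and \cite{Nebe(1998)}).
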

\begin{proof}
Let $G=\textrm{Dic}_{4n}$ $(n \geq 2)$ and $H$ a subgroup of $G.$ Note that $G$ admits a 2-dimensional faithful irreducible symplectic representation, and hence, $H$ admits a faithful symplectic representation (which is induced from that of $G$). If this representation is irreducible, then $H$ must be a dicyclic group, too. If it is reducible, then $H$ must be cyclic (so that it does not contribute to the Jordan constant of $G$). \\
\indent Now, since $G$ contains a $C_{2n}$ as an abelian normal subgroup, we have $J_G \geq [G:C_{2n}]=2.$ If $J_G >2,$ then by the minimality of $J_G,$ it follows that there is a subgroup $H$ of $G$ such that $H$ contains no abelian normal subgroups of index $\leq 2.$ But then, since $H$ is either a dicyclic group or a cyclic group by the above observation, this is a contradiction. Hence we can see that $J_G=2,$ as desired. \\
\indent This completes the proof.
\end{proof}

\begin{lemma}\label{binary lem}
Let $G \in \{\mathfrak{T}^*, \mathfrak{O}^*, \mathfrak{I}^*\}.$ Then we have
\begin{equation*}
	J_{G}=\begin{cases} 12 & \mbox{if $G=\mathfrak{T}^*$}; \\ 24 & \mbox{if $G=\mathfrak{O}^*$}; \\ 60 & \mbox{if $G=\mathfrak{I}^*$} \end{cases}
	\end{equation*}
where $\mathfrak{T}^*$ (resp.\ $\mathfrak{O}^*$, resp.\ $\mathfrak{I}^*$) denotes the binary tetrahedral group (resp.\ binary octahedral group, resp.\ binary icosahedral group). 
\end{lemma}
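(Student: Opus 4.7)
The plan is to establish, for each $G \in \{\mathfrak{T}^*, \mathfrak{O}^*, \mathfrak{I}^*\}$, matching inequalities $J_G \geq c$ and $J_G \leq c$ with $c = 12, 24, 60$ respectively.

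For the lower bound, I would exploit the central extension
\begin{equation*}
1 \to Z \to G \to \overline{G} \to 1,
\end{equation*}
where $Z = Z(G) \cong C_2$ is generated by the unique involution in $G$ and $\overline{G} \in \{A_4, S_4, A_5\}$ is the corresponding rotation group. For any abelian normal subgroup $N \triangleleft G$, its image in $\overline{G}$ is an abelian normal subgroup of $\overline{G}$. Since $A_5$ is simple nonabelian, this forces $N \leq Z$ in the case $G = \mathfrak{I}^*$. In the other two cases the only nontrivial abelian normal subgroup of $\overline{G}$ is the Klein four-group $V_4$; but its preimage in $G$ is the quaternion group $Q_8$, which is not abelian, so again $N \leq Z$. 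Thus the maximal abelian normal subgroup of $G$ is $Z$, of index $|G/Z| \in \{12, 24, 60\}$, giving $J_G \geq c$.

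For the upper bound, I would apply Lemma \ref{fund lem}: it suffices to verify $J_H \leq c$ for every finite subgroup $H \leq G$. By the classical classification of finite subgroups of $\mathrm{SU}(2)$, into which each $G$ embeds, every such $H$ is, up to conjugacy, either cyclic, dicyclic of order $4n$ with $n \geq 2$, or one of the three binary polyhedral groups. Cyclic subgroups contribute $J_H = 1$, and dicyclic subgroups contribute $J_H = 2$ by Lemma \ref{dic lem}. As for binary polyhedral subgroups, order considerations exclude $\mathfrak{I}^*$ as a subgroup of $\mathfrak{O}^*$ (since $120 > 48$) and $\mathfrak{O}^*$ as a subgroup of $\mathfrak{I}^*$ (since $48 \nmid 120$), while $\mathfrak{T}^*$ does occur inside both $\mathfrak{O}^*$ (as the preimage of $A_4 \leq S_4$) and $\mathfrak{I}^*$. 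Handling the three values of $G$ in increasing order of $|G|$, the previously established value $J_{\mathfrak{T}^*} = 12$ is $\leq c$ in every remaining case, so we conclude $J_G \leq c$.

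The principal delicate point is the lower bound: identifying $Z(G)$ as the unique maximal abelian normal subgroup rests on the characteristic feature of binary polyhedral groups that $-1$ is the only involution, which forces the preimage of $V_4$ in $\mathfrak{T}^*$ or $\mathfrak{O}^*$ to be the nonabelian group $Q_8$ rather than $C_2 \times C_2$. Once this structural input is in hand, the upper bound is essentially bookkeeping given the classification of finite subgroups of $\mathrm{SU}(2)$ and Lemma \ref{dic lem}.
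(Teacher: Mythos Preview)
Your proposal is correct and takes a genuinely different route from the paper. For the lower bound, the paper simply enumerates the normal subgroups of each $G$ by inspection and picks out the abelian ones, whereas you argue structurally via the quotient map $G \twoheadrightarrow \overline{G}$ and the normal-subgroup lattice of $A_4$, $S_4$, $A_5$; your approach explains \emph{why} the center is the unique maximal abelian normal subgroup rather than just verifying it. For the upper bound, the paper bounds the order of each proper subgroup directly (e.g.\ any proper $H \leq \mathfrak{T}^*$ has $|H|\le 8$, so already the trivial subgroup has index $\le 8<12$; similarly $|H|\le 24$ for proper $H$ in $\mathfrak{O}^*$ or $\mathfrak{I}^*$), while you invoke the ADE classification of finite subgroups of $\mathrm{SU}(2)$ together with Lemma~\ref{dic lem} and an induction through the three groups. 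The paper's route is marginally more elementary (it avoids the classification), yours is more systematic and reuses Lemma~\ref{dic lem} rather than an ad hoc order bound.

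One small point to tighten: invoking Lemma~\ref{fund lem} for a finite $G$ is tautological, since the supremum includes $H=G$ itself. So in your upper-bound step for $G=\mathfrak{T}^*$ you still need to say explicitly that $G$ itself has an abelian normal subgroup of index $\le 12$ --- namely $Z$ --- before the classification handles the proper subgroups. This is precisely what your lower-bound computation already supplies, so the fix is just to note that the identification of $Z$ as abelian normal of index $c$ does double duty: it gives $J_G\ge c$ and simultaneously covers the case $H=G$ in the verification of $J_G\le c$.
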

\begin{proof}
Let $G=\mathfrak{T}^*$. Then all the abelian subgroups of $G$ are $C_n$ for $n \in \{1,2,3,4,6\},$ among which the normal ones are $C_2$ and the trivial group. Hence it follows that $J_{G} \geq [G:C_2]=12.$ On the other hand, if $H$ is an arbitrary proper subgroup of $G,$ then $|H| \leq 8,$ (where the equality holds if we take $H=Q_8$), and hence, we can see that $H$ contains an abelian normal subgroup of index $\leq 8 <12$. Then it follows from the minimality of the Jordan constant that $J_{G} \leq 12.$ Thus we get $J_G = 12,$ as desired.  \\
\indent Similarly, for $G=\mathfrak{O}^*$, among the normal subgroups $\mathfrak{O}^*, \textrm{SL}_2(\F_3), Q_8, C_2,$ and the trivial group of $\mathfrak{O}^*$, the only abelian groups are $C_2$ and the trivial group, and hence, it follows that $J_G  \geq  [G:C_2] = 24$. On the other hand, if $H$ is an arbitrary proper subgroup of $G,$ then $|H| \leq 24$, (where the equality holds if we take $H=\textrm{SL}_2(\F_3)$), and hence, we can see that $H$ contains an abelian normal subgroup of index $\leq 24$. Then it follows from the minimality of the Jordan constant that $J_G \leq 24$, and hence, we get $J_G=24$, as desired. \\
\indent Finally, if $G=\mathfrak{I}^*$, then among the normal subgroups $\mathfrak{I}^*, C_2,$ and the trivial group of $\mathfrak{I}^*$, the only abelian groups are $C_2$ and the trivial group, and hence, it follows that $J_G  \geq  [G:C_2]= 60$. Let $d=J_G$ and assume that $d>60.$ Then by definition, there is a subgroup $H$ of $G$ such that $H$ contains no abelian normal subgroups of index $\leq 60.$ But then since $|H| \leq 24$ (by looking at the list of all subgroups of $G$), this is a contradiction. Hence we can see that $J_G=d=60.$  \\
\indent This completes the proof.  
\end{proof}
Using Lemmas \ref{fund lem}, \ref{dic lem}, and \ref{binary lem} above, we obtain:
\begin{theorem}\label{over Q main thm}
Let $\mathcal{D}$ be a quaternion division algebra over $\Q$ and let $R_{\mathcal{D}}=\textrm{Ram}(\mathcal{D}).$ Then we have
\begin{equation*}
	J_{\mathcal{D}^{\times}}=\begin{cases} 12 & \mbox{if and only if $R_{\mathcal{D}}=\{2,\infty\}$}; \\ 2 & \mbox{if and only if $R_{\mathcal{D}}=\{3,\infty\}$}; \\ 1 &  \mbox{otherwise}. \end{cases}
	\end{equation*}
\end{theorem}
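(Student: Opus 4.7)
Plan. By Lemma \ref{fund lem}, $J_{\mathcal{D}^{\times}} = \sup_{H \leq \mathcal{D}^{\times},~|H|<\infty} J_{H}$, so the task reduces to enumerating the finite subgroups of $\mathcal{D}^{\times}$ and computing their Jordan constants. For any finite $H \leq \mathcal{D}^{\times}$, the $\Q$-subalgebra $\Q[H] \subseteq \mathcal{D}$ is finite-dimensional inside a division algebra, hence itself a division algebra, and therefore equals $\Q$, a quadratic subfield of $\mathcal{D}$, or all of $\mathcal{D}$. In the first two cases $H$ is a finite subgroup of the unit group of a field of degree $\leq 2$ over $\Q$, so $H$ is cyclic and contributes only $J_{H}=1$; hence only the case $\Q[H] = \mathcal{D}$ can push the Jordan constant above $1$.

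To enumerate such $H$, I would split on the signature of $\mathcal{D}$. If $\mathcal{D}$ is indefinite then $\mathcal{D}^{\times}$ embeds in $\textrm{GL}_{2}(\R)$; finite subgroups of $\textrm{GL}_{2}(\R)$ are cyclic or dihedral, and no noncyclic dihedral group embeds in a division algebra (where $\pm 1$ are the only elements of order $\leq 2$), so every finite subgroup is cyclic and $J_{\mathcal{D}^{\times}}=1$. If $\mathcal{D}$ is definite then $\mathcal{D} \otimes_{\Q} \R \cong \mathbb{H}$, and the classical list of finite subgroups of $\mathbb{H}^{\times}$ is $C_{n}$, $\textrm{Dic}_{4n}$ ($n \geq 2$), $\mathfrak{T}^{*}$, $\mathfrak{O}^{*}$, $\mathfrak{I}^{*}$. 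Any element of order $m$ in $H$ forces $\Q(\zeta_{m}) \hookrightarrow \mathcal{D}$, so $\varphi(m) \leq 2$ and $m \in \{1,2,3,4,6\}$; this rules out $\mathfrak{I}^{*}$ (order $10$) and $\mathfrak{O}^{*}$ (order $8$), and restricts $\textrm{Dic}_{4n}$ to $n \in \{2,3\}$.

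The remaining step, which I expect to be the main obstacle, is to pin down exactly which $D_{p,\infty}$ accommodates each surviving candidate. An embedding of $Q_{8} = \textrm{Dic}_{8}$ produces anticommuting $i,j \in \mathcal{D}$ with $i^{2}=j^{2}=-1$, so $\Q\langle i,j \rangle$ is a $4$-dimensional sub-$\Q$-algebra of $\mathcal{D}$, hence equal to $\mathcal{D}$; this identifies $\mathcal{D} \cong (-1,-1 \mid \Q) = D_{2,\infty}$, and conversely $\omega = (-1+i+j+k)/2 \in D_{2,\infty}$ extends $Q_{8}$ to a copy of $\mathfrak{T}^{*}$. Similarly $\textrm{Dic}_{12}$, generated by $a = \zeta_{6}$ and $b$ with $b^{2}=-1$, $bab^{-1}=a^{-1}$, forces $\mathcal{D} \cong (-3,-1 \mid \Q)$, which a Hilbert-symbol and reciprocity computation identifies as $D_{3,\infty}$. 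Combining these with Lemmas \ref{dic lem} and \ref{binary lem}: when $R_{\mathcal{D}} = \{2,\infty\}$, $\mathfrak{T}^{*}$ is the finite subgroup of largest Jordan constant, giving $J_{\mathcal{D}^{\times}} = 12$; when $R_{\mathcal{D}} = \{3,\infty\}$, the largest is $\textrm{Dic}_{12}$, giving $J_{\mathcal{D}^{\times}} = 2$; in every other case only cyclic subgroups occur and $J_{\mathcal{D}^{\times}} = 1$.
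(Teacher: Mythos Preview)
Your argument is correct, and it takes a genuinely more self-contained route than the paper's. The paper obtains the list of possible finite subgroups of $\mathcal{D}^{\times}$ by quoting \cite[Theorem~5.10]{Hwang(2018)}, and then identifies which $\mathcal{D}$ admit $\mathfrak{T}^{*}$, $Q_{8}$, or $\textrm{Dic}_{12}$ by invoking Amitsur's theorem \cite[Theorem~9]{1} and Nebe's classification \cite[Theorem~6.1]{Nebe(1998)} as black boxes. You instead split on the signature of $\mathcal{D}$: in the indefinite case you embed $\mathcal{D}^{\times}$ into $\textrm{GL}_{2}(\R)$ and use the involution count to kill noncyclic dihedral groups; in the definite case you pass to $\mathbb{H}^{\times}$, use the classical ADE list of its finite subgroups, and prune it with the constraint $\varphi(m)\leq 2$. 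Your identification of the host algebra for $Q_{8}$ and $\textrm{Dic}_{12}$ via explicit presentations as $(-1,-1\mid\Q)$ and $(-3,-1\mid\Q)$, followed by a Hilbert-symbol check, replaces the Amitsur--Nebe citations with a direct computation. The paper's approach is shorter on the page but leans on three outside references; yours is longer but essentially elementary and makes transparent \emph{why} exactly $D_{2,\infty}$ and $D_{3,\infty}$ are singled out. Both routes finish the same way, via Lemmas~\ref{fund lem}, \ref{dic lem}, and \ref{binary lem}.
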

\begin{proof} 
Let $G$ be a finite subgroup of $\mathcal{D}^{\times}.$ Then in light of \cite[Theorem 5.10]{Hwang(2018)}, together with dimension counting over $\Q$, we can see that $G$ is either one of the following three groups $\mathfrak{T}^*, \textrm{Dic}_{12}, Q_8$ or is a cyclic group. Note that $J_{\mathfrak{T}^*}=12, J_{\textrm{Dic}_{12}}=J_{Q_8}=2$ by Lemmas \ref{dic lem}, \ref{binary lem}, and the Jordan constant of cyclic groups is equal to $1.$ \\
\indent  Now, we observe that $R_{\mathcal{D}}=\{2,\infty\}$ if and only if $\mathcal{D}\cong D_{2,\infty}.$ In light of \cite[Theorem 9]{1} and \cite[Theorem 6.1]{Nebe(1998)}, we also have that $\mathfrak{T}^*$ is an absolutely irreducible maximal finite subgroup of $\mathcal{D}^{\times}$ if and only if $\mathcal{D} \cong D_{2,\infty}.$ Now, suppose that $R_{\mathcal{D}}=\{2,\infty\}.$ Then since every finite subgroup of $\mathcal{D}^{\times} \cong D_{2,\infty}^{\times}$ is a subgroup of $\mathfrak{T}^*,$ it follows from Lemma \ref{fund lem} that $J_{\mathcal{D}^{\times}}= J_{\mathfrak{T}^*}=12.$ Conversely, if $J_{\mathcal{D}^{\times}}=12,$ then by Lemmas \ref{fund lem} and \ref{binary lem}, we know that $\mathfrak{T}^* \leq \mathcal{D}^{\times},$ and hence, $\mathcal{D} \cong D_{2,\infty}.$ In a similar fashion, we can see that $J_{\mathcal{D}^{\times}}=2$ if and only if $\textrm{Dic}_{12} \leq \mathcal{D}^{\times},$ which is also equivalent to the fact that $\mathcal{D} \cong D_{3,\infty}.$\footnote{Here, we implicitly use a fact that if $Q_8 \leq \mathcal{D}^{\times},$ then $\mathcal{D} \cong D_{2,\infty}$ by \cite[Theorem 9]{1}, in which case, we have seen that $J_{\mathcal{D}^{\times}}=12$.} The last item follows from the observation that, in this case, every finite subgroup of $\mathcal{D}^{\times}$ is cyclic. \\
\indent This completes the proof. 
\end{proof}


The following is a restatement of the above theorem in a special case, that is related to our application: 
\begin{corollary}\label{Q cor}
Assume that $\mathcal{D}=D_{p,\infty}$ for some prime $p \geq 2$. Then we have
\begin{equation*}
	J_{\mathcal{D}^{\times}}=\begin{cases} 12 & \mbox{if and only if $p=2$}; \\ 2 & \mbox{if and only if $p=3$}; \\ 1 & \mbox{if and only if $p \geq 5$} .\end{cases}
	\end{equation*}
\end{corollary}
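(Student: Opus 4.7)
The plan is to reduce this corollary immediately to Theorem \ref{over Q main thm}, since the hypothesis $\mathcal{D} = D_{p,\infty}$ simply pins down the ramification set. By the definition recalled in the example preceding Lemma \ref{over Q lem}, $D_{p,\infty}$ is the unique quaternion division algebra over $\Q$ with $\textrm{Ram}(D_{p,\infty})=\{p,\infty\}$, where uniqueness is guaranteed by Remark \ref{fund rem}(ii). So under the identification $\mathcal{D}=D_{p,\infty}$, the statement $R_{\mathcal{D}}=\{p,\infty\}$ is tautological, and the corollary is just a re-indexing of Theorem \ref{over Q main thm} by the prime $p$ instead of by the set $R_{\mathcal{D}}$.

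Concretely, I would observe the following three equivalences and conclude in one line each. First, $R_{\mathcal{D}}=\{2,\infty\}$ holds if and only if $p=2$, so the first clause of Theorem \ref{over Q main thm} gives $J_{\mathcal{D}^{\times}}=12$ in that case and only in that case. Second, $R_{\mathcal{D}}=\{3,\infty\}$ holds if and only if $p=3$, yielding $J_{\mathcal{D}^{\times}}=2$ by the second clause. Finally, the remaining possibility $p \geq 5$ falls into the ``otherwise'' case of Theorem \ref{over Q main thm}, since $\{p,\infty\}$ is then neither $\{2,\infty\}$ nor $\{3,\infty\}$; this yields $J_{\mathcal{D}^{\times}}=1$.

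There is no real obstacle here, since the corollary is a direct specialization of Theorem \ref{over Q main thm}; the only thing worth double-checking is that the three cases of the theorem truly partition the possibilities for a ramification set of the form $\{p,\infty\}$, which they obviously do. The reason for recording the corollary separately is simply that the description indexed by the single prime $p$ is the form that will be convenient when it is invoked in the application to endomorphism algebras of simple abelian surfaces later.
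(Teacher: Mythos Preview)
Your proposal is correct and follows essentially the same approach as the paper: the paper's proof simply recalls that $D_{p,\infty}$ is ramified exactly at $p$ and $\infty$, and then invokes Theorem \ref{over Q main thm} directly. Your argument is a slightly more explicit version of the same one-line reduction.
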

\begin{proof}
We recall that $D_{p,\infty}$ is ramified exactly at the primes $p$ and $\infty,$ and hence, the desired result follows immediately from Theorem \ref{over Q main thm}. 
\end{proof}

The main difference between Theorem \ref{over Q main thm} and Corollary \ref{Q cor} is that the $R_{\mathcal{D}}$ in the last item of Theorem \ref{over Q main thm} can be any set of places of $\Q$, whose cardinality is even and positive (see Remark \ref{fund rem} above). 
 
\begin{remark}
In fact, if $p \equiv 11 \pmod{12},$ then both of $C_4$ and $C_6$ are subgroups of $\mathcal{D}^{\times}:=D_{p,\infty}^{\times}$, and if $p \equiv 5 \pmod{12},$ then $C_6$ is a subgroup of $\mathcal{D}^{\times}$ while $C_4$ is not. Also, if $p \equiv 7~\pmod{12},$ then $C_4$ is a subgroup of $\mathcal{D}^{\times}$ while $C_6$ is not, and if $p \equiv 1 \pmod{12},$ then neither $C_4$ nor $C_6$ is a subgroup of $\mathcal{D}^{\times}.$
\end{remark}

Similarly, we also have
\begin{theorem}\label{over quad main thm}
Let $\mathcal{D}$ be a quaternion division algebra over a quadratic number field $K$ and let $R_{\mathcal{D}}=\textrm{Ram}(\mathcal{D}).$ Then we have
\begin{equation*}
	J_{\mathcal{D}^{\times}}=\begin{cases} 60 & \mbox{if and only if $K=\Q(\sqrt{5})$ and $R_{\mathcal{D}}=R_{\infty}$}; \\ 24 & \mbox{if and only if $K=\Q(\sqrt{2})$ and $R_{\mathcal{D}}=R_{\infty}$}; \\ 12 & \mbox{iff $\mathcal{D}=D_{2,\infty} \otimes_{\Q} K$ where $K=\Q(\sqrt{d})$ with $d \ne 2,5$, or}\\ & \mbox{$\mathcal{D}=D_{2,\infty} \otimes_{\Q} K$ where $K=\Q(\sqrt{-d})$ with $d \equiv 7 \pmod{8}$} ; \\  2 & \mbox{iff $\mathcal{D}=D_{3,\infty} \otimes_{\Q} K$ where $K = \Q(\sqrt{d})$ with $d \equiv 9,17 \pmod{24}$ or $d \equiv 1 \pmod{3}$, or} \\  &  \mbox{$\mathcal{D}=D_{3,\infty} \otimes_{\Q} K$ where $K = \Q(\sqrt{-d})$ with $d \equiv 2 \pmod{3}$;} \\ 1 & \mbox{otherwise}  \end{cases}
	\end{equation*}
where $d$ is, a priori, meant to be a positive, square-free integer in all the items above, and $R_{\infty}$ denotes the set of all infinite places of (the corresponding) $K$. 
\end{theorem}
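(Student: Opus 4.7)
The plan is to mimic the proof of Theorem~\ref{over Q main thm}, exploiting Lemma~\ref{fund lem} to reduce the computation of $J_{\mathcal{D}^{\times}}$ to the Jordan constants of the finite subgroups of $\mathcal{D}^{\times}$. First, using Amitsur's classification of finite subgroups of division rings (applied, as in \cite[Theorem 5.10]{Hwang(2018)} and \cite[Theorem 9]{1}, to quaternion division algebras whose center has degree $\leq 2$ over $\Q$), I would list all finite groups that can embed in $\mathcal{D}^{\times}$: these are cyclic groups, $Q_8$, the dicyclic groups $\textrm{Dic}_{4n}$, and the binary polyhedral groups $\mathfrak{T}^{*}$, $\mathfrak{O}^{*}$, and $\mathfrak{I}^{*}$. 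By Lemmas~\ref{dic lem} and~\ref{binary lem}, their Jordan constants are $1, 2, 2, 12, 24, 60$ respectively, so these are the only values that can occur, and by Lemma~\ref{fund lem} the value $J_{\mathcal{D}^{\times}}$ equals the maximum $J_H$ over finite subgroups $H \leq \mathcal{D}^{\times}$.

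Next, for each of the finite groups $\mathfrak{I}^{*}, \mathfrak{O}^{*}, \mathfrak{T}^{*}, \textrm{Dic}_{12}$, I would determine precisely the quaternion division algebras $\mathcal{D}$ over a quadratic field $K$ into which it embeds. This uses the known description of absolutely irreducible maximal finite subgroups of quaternion algebras (cf.\ \cite[Theorem 6.1]{Nebe(1998)}): the binary icosahedral group $\mathfrak{I}^{*}$ embeds in $\mathcal{D}^{\times}$ only when $K = \Q(\sqrt{5})$ and $\mathcal{D}$ is the Hurwitz--Lipschitz definite algebra ramified exactly at the two infinite places, i.e.\ $R_{\mathcal{D}}=R_{\infty}$; similarly $\mathfrak{O}^{*}$ embeds only when $K = \Q(\sqrt{2})$ and $R_{\mathcal{D}} = R_{\infty}$; while $\mathfrak{T}^{*}$ embeds if and only if $\mathcal{D}$ contains a copy of $D_{2,\infty}$, equivalently $\mathcal{D} \cong D_{2,\infty}\otimes_{\Q} K$, and similarly $\textrm{Dic}_{12}$ embeds exactly when $\mathcal{D} \cong D_{3,\infty}\otimes_{\Q} K$ (bearing in mind the footnote from the proof of Theorem~\ref{over Q main thm} that the presence of $Q_8$ already forces the $D_{2,\infty}$ case). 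If none of these groups embeds, every finite subgroup is cyclic and $J_{\mathcal{D}^{\times}} = 1$.

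The key remaining task is to translate the two conditions ``$D_{2,\infty}\otimes_{\Q} K$ is a division algebra'' and ``$D_{3,\infty}\otimes_{\Q} K$ is a division algebra'' into conditions on the discriminant of $K$, and then to match these with the given ramification set $R_{\mathcal{D}}$. For this I would use Remark~\ref{fund rem}, which says that $\mathcal{D}$ is determined by $R_{\mathcal{D}}$, together with the Albert--Brauer--Hasse--Noether base-change formula: a place $v$ of $K$ above a rational prime $\ell$ contributes to $R_{D_{\ell,\infty}\otimes_{\Q} K}$ precisely when the local degree $[K_{v}:\Q_{\ell}]$ is odd. A case-by-case analysis according to whether $2$ or $3$ is split, inert, or ramified in $K = \Q(\sqrt{\pm d})$, combined with the behavior of the real place ($\infty$ being split in real $K$ and inert in imaginary $K$), yields exactly the congruence conditions listed in the statement (e.g.\ $d \equiv 7 \pmod{8}$ for $2$ to remain inert in the imaginary case so that $R_{D_{2,\infty}\otimes K}$ is still of even size and positive, and analogous conditions modulo $24$ and modulo $3$ for the $D_{3,\infty}$ case).

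The main obstacle is precisely this bookkeeping in the last step: one has to verify that in each of the listed congruence classes the extended algebra $D_{p,\infty}\otimes_{\Q} K$ actually remains a division algebra (hence has a non-empty ramification set of even cardinality), and conversely that in every other case $\mathcal{D}$ contains no copy of $\mathfrak{I}^{*}, \mathfrak{O}^{*}, \mathfrak{T}^{*},$ or $\textrm{Dic}_{12}$, so that its only finite subgroups are cyclic. Once these case distinctions are carried out using the local invariant formula of Proposition~\ref{local inv} applied to the trivial base change (or equivalently the splitting behavior of $2, 3, \infty$ in $K/\Q$), the Jordan constant is read off from the highest-indexed group that embeds, yielding the five-line formula stated.
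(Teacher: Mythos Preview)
Your approach is essentially identical to the paper's: classify the finite subgroups via \cite[Theorem~5.10]{Hwang(2018)}, read off $J_{\mathcal{D}^{\times}}$ as the maximum of $\{1,2,12,24,60\}$ attained by an embedded subgroup using Lemmas~\ref{fund lem}, \ref{dic lem}, \ref{binary lem}, pin down which $\mathcal{D}$ admit $\mathfrak{I}^{*},\mathfrak{O}^{*},\mathfrak{T}^{*},\textrm{Dic}_{12}$ via \cite[Theorem~9]{1} and \cite[Theorem~6.1]{Nebe(1998)}, and finally translate ``$D_{p,\infty}\otimes_{\Q}K$ is a division algebra'' into splitting conditions on $2,3,\infty$ in $K$.

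Two small corrections are in order. First, in the imaginary quadratic case your parenthetical has the splitting behaviour backwards: your own local-degree criterion says a place $v\mid 2$ survives in $R_{D_{2,\infty}\otimes K}$ exactly when $[K_v:\Q_2]$ is odd, which for a quadratic $K$ means $2$ \emph{splits}; indeed $d\equiv 7\pmod 8$ is the condition for $2$ to split (not to be inert) in $\Q(\sqrt{-d})$, matching the paper's analysis. Second, Proposition~\ref{local inv} is not the right reference for the base-change step, since that proposition computes local invariants of $\textrm{End}_k^0(X)$ in terms of $\pi_X$; the relevant input here is just the standard restriction-of-invariants formula $\textrm{inv}_v(\mathcal{D}\otimes_{\Q}K)=[K_v:\Q_{\ell}]\cdot\textrm{inv}_{\ell}(\mathcal{D})$ from class field theory.
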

\begin{proof}
Let $G$ be a finite subgroup of $\mathcal{D}^{\times}.$ Then in light of \cite[Theorem 5.10]{Hwang(2018)}, we can see that $G$ is either one of the following three groups $\mathfrak{T}^*, \mathfrak{O}^*, \mathfrak{I}^*$ or is a dicyclic group or is a cyclic group. Note that $J_{\mathfrak{T}^*}=12, J_{\mathfrak{O}^*}=24, J_{\mathfrak{I}^*}=60$ by Lemma \ref{binary lem}, the Jordan constant of dicyclic groups equals $2$ by Lemma \ref{dic lem}, and the Jordan constant of cyclic groups is equal to $1.$ \\
\indent  Now, we observe that $K=\Q(\sqrt{5})$ and $R_{\mathcal{D}}=R_{\infty}$ if and only if $\mathcal{D}\cong D_{2,\infty} \otimes_{\Q} \Q(\sqrt{5}).$ In light of \cite[Theorem 9]{1} and \cite[Theorem 6.1]{Nebe(1998)}, we also have that $\mathfrak{I}^*$ is an absolutely irreducible maximal finite subgroup of $\mathcal{D}^{\times}$ if and only if $\mathcal{D} \cong D_{2,\infty} \otimes_{\Q} \Q(\sqrt{5}).$ Now, suppose that $K=\Q(\sqrt{5})$ and $R_{\mathcal{D}}=R_{\infty}$. Then since every finite subgroup of $\mathcal{D}^{\times} \cong (D_{2,\infty} \otimes_{\Q} \Q(\sqrt{5}))^{\times}$ is a subgroup of $\mathfrak{I}^*,$ it follows from Lemma \ref{fund lem} that $J_{\mathcal{D}^{\times}}= J_{\mathfrak{I}^*}=60.$ Conversely, if $J_{\mathcal{D}^{\times}}=60,$ then by Lemmas \ref{fund lem} and \ref{binary lem}, we know that $\mathfrak{I}^* \leq \mathcal{D}^{\times},$ and hence, $\mathcal{D} \cong D_{2,\infty} \otimes_{\Q} \Q(\sqrt{5}).$ In a similar fashion, we can see that $J_{\mathcal{D}^{\times}}=24$ if and only if $\mathfrak{O}^* \leq \mathcal{D}^{\times},$ which is also equivalent to the fact that $K=\Q(\sqrt{2})$ and $R_{\mathcal{D}}=R_{\infty}$. For $J_{\mathcal{D}^{\times}}=12,$ we note that $J_{\mathcal{D}^{\times}}=12$ if and only if $\mathfrak{T}^* \leq \mathcal{D}^{\times}$ and $\mathfrak{O}^*, \mathfrak{I}^*$ are not subgroups of $\mathcal{D}^{\times},$ which is also equivalent to the fact that $\mathcal{D} \cong D_{2,\infty} \otimes_{\Q} K$ (in view of \cite[Theorem 9]{1}) that is a division algebra and the conditions for first two items above do not hold. Now, we examine the latter condition further. If $K=\Q(\sqrt{d})$ is a real quadratic field, then $\mathcal{D} \cong D_{2,\infty} \otimes_{\Q} K$ is always a division algebra so that it suffices to exclude the cases of $d=2$ and $d=5.$ If $K=\Q(\sqrt{-d})$ is an imaginary quadratic field, then it suffices to find the condition of $\mathcal{D} \cong D_{2,\infty} \otimes_{\Q} K$ being a division algebra, which is equivalent to the fact that $2$ splits completely in $\Q(\sqrt{-d})$, and the latter condition is equivalent to $d \equiv 7 \pmod{8}.$ Similarly, we can see that $J_{\mathcal{D}^{\times}}=2$ if and only if $\textrm{Dic}_{12} \leq \mathcal{D}^{\times},$\footnote{We need to note that there are, a priori, other dicyclic groups, namely, $Q_8, \textrm{Dic}_{16}, \textrm{Dic}_{20}$, and $\textrm{Dic}_{24}$, that can be embedded in $\mathcal{D}^{\times}$ so that we can have $J_{\mathcal{D}^{\times}}=2.$ However, it can be shown that we can rule out these dicyclic groups as follows: as an example, we give an argument for the group $\textrm{Dic}_{24}.$ If $\textrm{Dic}_{24} \leq \mathcal{D}^{\times},$ then since the 2-Sylow subgroup of $\textrm{Dic}_{24}$ is a $Q_8,$ we can see that $\mathcal{D} \cong D_{2,\infty} \otimes_{\Q} K$ for some $K$ by \cite[Theorem 9]{1}. Then it follows from \cite[Theorem 6.1]{Nebe(1998)} that $K=\Q(\sqrt{3})$ and $R_{\mathcal{D}}=R_{\infty}$, and hence, we can see that $J_{\mathcal{D}^{\times}}=12$ by the third item of this theorem (having $\mathcal{D} \cong D_{2,\infty} \otimes_{\Q} \Q(\sqrt{3})$). A similar argument applies for other groups, too.}    while $\mathfrak{T}^*$ is not a subgroup of $\mathcal{D}^{\times},$ which is also equivalent to saying that $\mathcal{D} \cong D_{3,\infty} \otimes_{\Q} K$ that is a division algebra, and $\mathcal{D} \not \cong D_{2,\infty} \otimes_{\Q} K.$ If $K=\Q(\sqrt{d})$ is a real quadratic field, then it suffices to consider the last condition, and we can see (by a careful analysis) that $\mathcal{D} \not \cong D_{2,\infty} \otimes_{\Q} K$ if and only if $d \equiv 17 \pmod{24}$ (in which case, $3$ is inert and $2$ splits completely in $\Q(\sqrt{d})$) or $d \equiv 1 \pmod{3}$ (in which case, $3$ splits completely in $\Q(\sqrt{d})$ so that the ramification behavior of $2$ does not matter), or $d \equiv 9 \pmod{24}$ (in which case, $3$ is ramified and $2$ splits completely in $\Q(\sqrt{d}))$. If $K=\Q(\sqrt{-d})$ is an imaginary quadratic field, then it suffices to find the condition of $\mathcal{D} \cong D_{3,\infty} \otimes_{\Q} K$ being a division algebra, which is equivalent to the fact that $3$ splits completely in $\Q(\sqrt{-d})$, and the latter condition is equivalent to $d \equiv 2 \pmod{3}$. Finally, the last item follows from the observation that, in this case, every finite subgroup of $\mathcal{D}^{\times}$ is cyclic (in view of \cite[Theorem 5.10]{Hwang(2018)}). \\
\indent This completes the proof. 
\end{proof}

\begin{remark}
To give a slightly more detailed description on the case when $J_{\mathcal{D}^{\times}}=1,$ we consider the following three cases: \\

\noindent (Case I) $K=\Q(\sqrt{-1})$ and $R_{\mathcal{D}} \subseteq \{\mathfrak{p} \in \textrm{Pl}(K)~|~\mathfrak{p}~\textrm{lies over}~2~\textrm{or}~3~\textrm{or}~p \equiv 5 \pmod{12}\}$ or;\\
\indent \indent \indent $K=\Q(\sqrt{-3})$ and $R_{\mathcal{D}} \subseteq \{\mathfrak{p} \in \textrm{Pl}(K)~|~\mathfrak{p}~\textrm{lies over}~2~\textrm{or}~3~\textrm{or}~p \equiv 7 \pmod{12}\}$ or;\\
\indent \indent \indent $K=\Q(\sqrt{3})$ and $R_{\mathcal{D}} \subseteq \{\mathfrak{p} \in \textrm{Pl}(K)~|~\mathfrak{p}~\textrm{lies over}~2~\textrm{or}~3~\textrm{or}~p \equiv 11 \pmod{12}~\textrm{or}~\infty\}$, $R_{\mathcal{D}} \ne R_{\infty}$. \\
(Case II) $K=\Q(\sqrt{5})$ and $R_{\mathcal{D}} \subseteq \{\mathfrak{p} \in \textrm{Pl}(K)~|~\mathfrak{p}~\textrm{lies over}~2~\textrm{or}~5~\textrm{or}~p \equiv 3,7,9 \pmod{10}~\textrm{or}~\infty \}, R_{\mathcal{D}} \ne R_{\infty}$.\\
(Case III) $K=\Q(\sqrt{-1})$ and $R_{\mathcal{D}} \subseteq \{\mathfrak{p} \in \textrm{Pl}(K)~|~\mathfrak{p}~\textrm{lies over}~2~\textrm{or}~p \equiv 5 \pmod{8}\}$, $R_{\mathcal{D}} \not\subseteq \{\mathfrak{p} \in \textrm{Pl}(K)~|~\mathfrak{p}~\textrm{lies over}~2~\textrm{or}~3~\textrm{or}~p \equiv 5 \pmod{12}\}$ or;\\
\indent \indent \indent $K=\Q(\sqrt{-2})$ and $R_{\mathcal{D}} \subseteq \{\mathfrak{p} \in \textrm{Pl}(K)~|~\mathfrak{p}~\textrm{lies over}~2 ~\textrm{or}~p \equiv 3 \pmod{8}\}, R_{\mathcal{D}} \ne \{\mathfrak{p} \in \textrm{Pl}(K)~|~\mathfrak{p}~\textrm{lies over}~3\}$ or;\\
\indent \indent \indent $K=\Q(\sqrt{2})$ and $R_{\mathcal{D}} \subseteq \{\mathfrak{p} \in \textrm{Pl}(K)~|~\mathfrak{p}~\textrm{lies over}~2~\textrm{or}~p \equiv 7 \pmod{8}~\textrm{or}~\infty\}, R_{\mathcal{D}} \ne R_{\infty}$.\\
\indent (Here, $\textrm{Pl}(K)$ denotes the set of all places of $K.$) \\

If (Case I) holds for $\mathcal{D}$, then we have $C_{12} \leq \mathcal{D}^{\times}$. Similarly, if (Case II) (resp.\ (Case III)) holds for $\mathcal{D},$ then we have $C_{10} \leq \mathcal{D}^{\times}$ (resp.\ $C_8 \leq \mathcal{D}^{\times}$). 
\end{remark}


Now, we further specify the quaternion algebra to work with, and obtain a somewhat interesting result:


\begin{corollary}\label{quad cor}
Assume that $\mathcal{D}=D_{p,\infty} \otimes_{\Q} \Q(\sqrt{p})$ for some prime $p$. Then we have
\begin{equation*}
	J_{\mathcal{D}^{\times}}=\begin{cases} 60 & \mbox{if and only if $p=5$}; \\ 24 & \mbox{if and only if $p=2$}; \\ 12  & \mbox{if and only if $p \equiv 3 \pmod{4}$ or $p>5$ and $p \equiv 5 \pmod{8}$} ; \\ 2 & \mbox{if and only if $p \equiv 17 \pmod{24}$}; \\ 1 & \mbox{otherwise}.\end{cases}
	\end{equation*}
\end{corollary}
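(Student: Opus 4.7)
The plan is to apply Theorem~\ref{over quad main thm} directly, after identifying the quaternion algebra $\mathcal{D} = D_{p,\infty} \otimes_{\Q} \Q(\sqrt{p})$ by its ramification set $R_{\mathcal{D}}$ and comparing it to the distinguished algebras $D_{2,\infty} \otimes_{\Q} K$ and $D_{3,\infty} \otimes_{\Q} K$ that appear in the cases of that theorem. The fundamental tool throughout is the base-change formula for local Brauer invariants: for any quaternion algebra $\mathcal{A}$ over $\Q$ and any place $w$ of $K = \Q(\sqrt{p})$ above a rational place $v$, one has $\textrm{inv}_w(\mathcal{A} \otimes_{\Q} K) = [K_w : \Q_v] \cdot \textrm{inv}_v(\mathcal{A}) \pmod{\Z}$. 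Since Remark~\ref{fund rem} tells us that a quaternion division algebra over $K$ is determined up to isomorphism by its ramification set, the problem reduces to a systematic tabulation of the splitting behavior of $2$ and $3$ in $\Q(\sqrt{p})$.

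First I compute $R_{\mathcal{D}}$. Because $p$ always ramifies in $\Q(\sqrt{p})$, the unique place $w$ above $p$ satisfies $[K_w : \Q_p] = 2$, so the local invariant $1/2$ of $D_{p,\infty}$ at $p$ doubles to $0$; hence $\mathcal{D}$ splits above $p$. The field $K$ is totally real with two real places of local degree one, so $\mathcal{D}$ is ramified at both, while every other rational place contributes zero invariant. Therefore $R_{\mathcal{D}} = R_{\infty}$. This immediately places $\mathcal{D}$ into the cases $J_{\mathcal{D}^{\times}} = 60$ and $J_{\mathcal{D}^{\times}} = 24$ of Theorem~\ref{over quad main thm} precisely when $K = \Q(\sqrt{5})$ and $K = \Q(\sqrt{2})$ respectively, i.e.\ when $p = 5$ and $p = 2$, accounting for the first two bullets.

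For the remaining primes $p \notin \{2, 5\}$, Theorem~\ref{over quad main thm} leaves only $J_{\mathcal{D}^{\times}} \in \{1, 2, 12\}$, with $J = 12$ iff $\mathcal{D} \cong D_{2,\infty} \otimes_{\Q} K$ and $J = 2$ iff $\mathcal{D} \cong D_{3,\infty} \otimes_{\Q} K$ but $\mathcal{D} \not\cong D_{2,\infty} \otimes_{\Q} K$. Repeating the invariant computation on $D_{2,\infty} \otimes_{\Q} \Q(\sqrt{p})$ yields ramification set $R_{\infty}$ exactly when $2$ does not split in $\Q(\sqrt{p})$, namely when $p \not\equiv 1 \pmod{8}$; this is precisely the union of $p \equiv 3 \pmod{4}$ with $p \equiv 5 \pmod{8}$ (the latter requiring $p > 5$), giving the $J = 12$ bullet. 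The analogous computation for $D_{3,\infty} \otimes_{\Q} \Q(\sqrt{p})$ shows that its ramification set equals $R_{\infty}$ iff $3$ does not split in $\Q(\sqrt{p})$, equivalently (via the Legendre symbol $(p/3)$) iff $p \equiv 2 \pmod{3}$. The case $J = 2$ therefore requires both $p \equiv 1 \pmod{8}$ and $p \equiv 2 \pmod{3}$, which by the Chinese remainder theorem combines to $p \equiv 17 \pmod{24}$; the remaining primes, $p \equiv 1 \pmod{24}$, land in the final $J = 1$ bullet. The only real obstacle in this proof is keeping the splitting/ramification behavior of $2$ and $3$ in $\Q(\sqrt{p})$ straight across the various residue classes modulo $24$, and checking at the boundary (e.g.\ $p = 3$) that the special case $D_{p,\infty} \cong D_{3,\infty}$ is already absorbed into the general $D_{2,\infty} \otimes_{\Q} K$ bullet via the identity of ramification sets; every individual invariant computation itself is entirely routine.
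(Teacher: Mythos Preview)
Your proof is correct and follows essentially the same route as the paper's: both reduce the corollary to Theorem~\ref{over quad main thm} and translate its conditions into congruence conditions on $p$ by comparing the ramification sets of $\mathcal{D}$, $D_{2,\infty}\otimes_{\Q}K$, and $D_{3,\infty}\otimes_{\Q}K$ over $K=\Q(\sqrt{p})$. The only difference is one of exposition: you spell out the base-change formula for Brauer invariants and the splitting criteria for $2$ and $3$ in $\Q(\sqrt{p})$, whereas the paper simply asserts the resulting equivalences (``which, in turn, is equivalent to\dots'') and leaves the verification to the reader.
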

\begin{proof}
The cases of $J_{\mathcal{D}^{\times}}=60$ or $24$ follow immediately from Theorem \ref{over quad main thm}. For $J_{\mathcal{D}^{\times}}=12,$ we note that $J_{\mathcal{D}^{\times}}=12$ if and only if $\mathcal{D} =D_{p,\infty} \otimes_{\Q} \Q(\sqrt{p}) \cong D_{2,\infty}\otimes_{\Q} \Q(\sqrt{p})$ with $p \ne 2,5$ by Theorem \ref{over quad main thm}, which, in turn, is equivalent to the fact that $p \equiv 3 \pmod{4}$ or $p>5$ and $p \equiv 5 \pmod{8}.$ Similarly, for $J_{\mathcal{D}^{\times}}=2$, we note that $J_{\mathcal{D}^{\times}}=2$ if and only if $\mathcal{D}=D_{p,\infty} \otimes_{\Q} \Q(\sqrt{p}) \cong D_{3,\infty}\otimes_{\Q} \Q(\sqrt{p})$ with $p \equiv 9, 17 \pmod{24}$ or $p \equiv 1 \pmod{3}$ by Theorem \ref{over quad main thm}, which, in turn, is equivalent to the fact that $p \equiv 17 \pmod{24}$.\\
\indent This completes the proof.
\end{proof}

Combining all the previous results, we can obtain the following
\begin{corollary}\label{simple cor}
Let $X$ be a simple abelian surface over a finite field $k:=\F_q$ with $q=p^a$ for some prime $p$ and an integer $a \geq 1.$ Let $D=\textrm{End}_k^0(X).$ Then the Jordan constant $J_{D^{\times}}$ of $D^{\times}$ is contained in the set $\{1,2,12,24,60 \}.$ 
\end{corollary}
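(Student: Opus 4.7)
The plan is to reduce the statement directly to results already established in the excerpt. By Lemma \ref{poss end alg}, the endomorphism algebra $D=\End_k^0(X)$ of a simple abelian surface over a finite field falls into exactly one of three mutually exclusive types:
(1) a totally definite quaternion algebra over $\Q$ or over a real quadratic field;
(2) a CM field of degree $4$;
(3) a quaternion division algebra over an imaginary quadratic field.
Since the desired conclusion is a pointwise statement about $J_{D^\times}$, it suffices to handle each of these three cases separately.

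First I would dispatch case (2): when $D$ is a (commutative) field, every finite subgroup of $D^\times$ is cyclic, since a finite subgroup of the multiplicative group of any field is cyclic. Hence every finite subgroup of $D^\times$ is itself abelian, so by the remark following Definition \ref{Jordan} we have $J_{D^\times}=1$, which lies in $\{1,2,12,24,60\}$.

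Next I would dispatch cases (1) and (3) simultaneously by quoting Theorem \ref{main theorem1}: in either case $D$ is a quaternion division algebra whose center $K$ satisfies $[K:\Q]\le 2$, so by Theorem \ref{over Q main thm} (when $K=\Q$, which can only happen under case (1)) we have $J_{D^\times}\in\{1,2,12\}$, and by Theorem \ref{over quad main thm} (when $K$ is a real quadratic field under case (1), or an imaginary quadratic field under case (3)) we have $J_{D^\times}\in\{1,2,12,24,60\}$. In both sub-cases the value lies in $\{1,2,12,24,60\}$.

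There is essentially no obstacle here, as all of the work has been carried out in the preceding sections; the corollary is a clean packaging of Lemma \ref{poss end alg} together with Theorem \ref{main theorem1} and the elementary fact about finite subgroups of a field. The only point that requires a brief mention is that, since the three types in Lemma \ref{poss end alg} cover all possibilities for $D$, the union of the possible Jordan constants across the three cases is precisely the claimed set $\{1,2,12,24,60\}$, completing the proof.
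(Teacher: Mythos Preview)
Your proposal is correct and follows essentially the same approach as the paper: invoke Lemma~\ref{poss end alg} to split into the three types, handle the CM-field case by noting finite subgroups of a field are cyclic (so $J_{D^\times}=1$), and handle the quaternion cases via Theorems~\ref{over Q main thm} and~\ref{over quad main thm}. The only cosmetic difference is that the paper treats case~(3) separately and records the sharper conclusion $J_{D^\times}\in\{1,2,12\}$ for the imaginary quadratic center, whereas you fold it into the general quadratic case; for the purposes of this corollary either suffices.
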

\begin{proof}
By Lemma \ref{poss end alg}, we need to consider the following three cases: \\
(1) If $D$ is a totally definite quaternion algebra over $\Q$ or over a real quadratic field, then the desired assertion follows from Theorems \ref{over Q main thm} and \ref{over quad main thm}. \\
(2) If $D$ is a CM-field of degree $4$, then every finite subgroup of $D^{\times}$ is cyclic, and hence, it follows that $J_{D^{\times}}=1$. \\
(3) If $D$ is a quaternion division algebra over an imaginary quadratic field, then it follows from Theorem \ref{over quad main thm} that $J_{D^{\times}} \in \{1,2,12\}$. \\
\indent This completes the proof.
\end{proof}

The following theorem is the converse of Corollary \ref{simple cor}.
\begin{theorem}\label{realized finite}
Let $n$ be an integer contained in the set $\{1,2,12,24,60\}.$ Then there is a simple abelian surface $X$ over some finite field $k:=\F_q$ with $q=p^a$ for a prime $p$ and an integer $a \geq 1$ such that $J_{D^{\times}}=n$, where $D:=\textrm{End}_k^0(X).$ 
\end{theorem}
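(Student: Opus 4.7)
The plan is to use Honda--Tate theory (Theorem \ref{thm HondaTata}) together with Proposition \ref{local inv} to construct, for each $n \in \{1, 2, 12, 24, 60\}$, an explicit $q$-Weil number $\pi$ whose associated simple abelian variety $X/\F_q$ has dimension $2$ and prescribed endomorphism algebra. Concretely, for each $\pi$ one uses Corollary \ref{cor TateEnd0}(b) to confirm $g = de/2 = 2$, Proposition \ref{local inv}(a) to compute the local invariants of $D = \End_k^0(X)$ over $K = \Q(\pi)$, and finally Theorem \ref{over Q main thm} or Theorem \ref{over quad main thm} to extract $J_{D^\times}$.

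For $n \in \{60, 24, 12, 2\}$, I would use the uniform choice $\pi = \sqrt{p}$ with $q = p$ and $p = 5, 2, 3, 17$ respectively, so $K = \Q(\sqrt{p})$ is real quadratic and $p$ ramifies in $K$. At the unique prime $\fp$ above $p$, Proposition \ref{local inv}(a) yields $\textrm{inv}_\fp(D) = (1/2)\cdot 2 \equiv 0 \pmod{1}$; the two real infinite places each contribute invariant $1/2$, forcing $d = 2$ (so $g = 2$) and $D$ to be the unique totally definite quaternion algebra over $K$ with $\textrm{Ram}(D) = R_\infty$. The key point is then to identify $D$ as $D_{\ell,\infty} \otimes_\Q K$ for the appropriate rational prime $\ell$: the algebra $D_{\ell,\infty} \otimes_\Q K$ has ramification exactly $R_\infty$ iff $\ell$ fails to split in $K$. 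For $p = 5, 2, 3$, this holds with $\ell = 2$ (as $2$ is inert in $\Q(\sqrt 5)$ and ramifies in $\Q(\sqrt 2)$ and $\Q(\sqrt 3)$), so Theorem \ref{over quad main thm} gives $J_{D^\times} = 60, 24, 12$ in turn. For $p = 17$, $2$ splits in $\Q(\sqrt{17})$ (since $17 \equiv 1 \pmod{8}$) but $3$ is inert there (since $17 \equiv 2 \pmod{3}$), so $D \cong D_{3,\infty} \otimes_\Q \Q(\sqrt{17})$, and because $17 \equiv 17 \pmod{24}$, Theorem \ref{over quad main thm} yields $J_{D^\times} = 2$.

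For $n = 1$ I would switch to a Type IV construction: take $\pi = 5i \in \Q(i)$, a $25$-Weil number with $K = \Q(\pi) = \Q(i)$ imaginary quadratic. The rational prime $5$ splits in $\Q(i)$ as $(2+i)(2-i)$; since $\pi = i \cdot (2+i)(2-i)$, each of the two primes $\fp_j$ above $5$ satisfies $\ord_{\fp_j}(\pi) = 1$ and $\textrm{inv}_{\fp_j}(D) = 1/2$, while all other local invariants vanish. Hence $d = 2$, $g = 2$, and $D$ is a quaternion division algebra over $\Q(i)$ ramified precisely at the two primes above $5$. Because $2$ ramifies and $3$ is inert in $\Q(i)$, both $D_{2,\infty} \otimes_\Q \Q(i)$ and $D_{3,\infty} \otimes_\Q \Q(i)$ have every local invariant integral and split as $M_2(\Q(i))$; therefore $D$ matches neither, and the last (\emph{otherwise}) clause of Theorem \ref{over quad main thm} gives $J_{D^\times} = 1$.

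The hard part is the correct identification of $D$ as a specific quaternion algebra $D_{\ell,\infty} \otimes_\Q K$ (or as none of these), which hinges on careful local-invariant bookkeeping together with the splitting behavior of $2$ and $3$ in the quadratic field $K$. The $n = 2$ case is the subtlest, as it demands a prime $p$ for which $2$ splits and $3$ stays inert in $\Q(\sqrt p)$, displacing the default identification $D \cong D_{2,\infty} \otimes_\Q K$ in favor of $D \cong D_{3,\infty} \otimes_\Q K$ and thereby activating the fourth case of Theorem \ref{over quad main thm}.
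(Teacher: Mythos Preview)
Your proposal is correct and, for $n\in\{2,12,24,60\}$, coincides exactly with the paper's proof: both choose the $q$-Weil number $\pi=\sqrt{p}$ with $q=p$ and $p=17,3,2,5$ respectively, identify $D$ as the totally definite quaternion algebra over $\Q(\sqrt{p})$ with $\textrm{Ram}(D)=R_\infty$, and read off $J_{D^\times}$ from Theorem~\ref{over quad main thm} (the paper packages this last step via Corollary~\ref{quad cor}).

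The only genuine difference is the case $n=1$. The paper keeps the construction uniform by taking $\pi=\sqrt{73}$, so again $K=\Q(\sqrt{73})$ is real quadratic, $\textrm{Ram}(D)=R_\infty$, and $D\cong D_{73,\infty}\otimes_\Q\Q(\sqrt{73})$; since $73\equiv 1\pmod{24}$, Corollary~\ref{quad cor} immediately gives $J_{D^\times}=1$. You instead switch to a Type~IV example, taking $\pi=5i$ over $\F_{25}$ so that $K=\Q(i)$ is imaginary quadratic and $D$ is ramified at the two primes above~$5$; you then argue that neither $D_{2,\infty}\otimes_\Q\Q(i)$ nor $D_{3,\infty}\otimes_\Q\Q(i)$ is a division algebra, forcing the ``otherwise'' clause of Theorem~\ref{over quad main thm}. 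Both routes are valid. The paper's choice has the virtue of complete uniformity (a single family $\pi=\sqrt{p}$ covers all five values of~$n$ and feeds directly into Corollary~\ref{quad cor}); your choice illustrates that $J_{D^\times}=1$ is already attained among the Type~IV endomorphism algebras of Lemma~\ref{poss end alg}(3), which is a mildly different datum.
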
 
\begin{proof}
We consider each case one by one. More precisely, we will give an irreducible polynomial $h \in \Z[t]$ such that $h^2 = f_X$ for some simple abelian surface $X$ over a finite field $k=\F_q~(q=p^a)$ with the multiplicative subgroup of the endomorphism algebra $\textrm{End}_k^0(X)$ having the desired Jordan constant. We provide a detailed proof for the case of $n=1$, and then we can proceed in a similar fashion with the specified polynomial $h$ for other cases. \\
(1) For $n=1,$ take $h=t^2-73.$ Let $\pi$ be a zero of $h$ so that $\pi$ is a $73$-Weil number. By Theorem \ref{thm HondaTata}, there exists a simple abelian variety $X$ over $k:=\F_{73}$ of dimension $r$ such that $\pi_X$ is conjugate to $\pi$ so that $K:=\Q(\pi_X)=\Q(\sqrt{73}).$ Then since $73$ is totally ramified in $K,$ it follows from both parts of Proposition \ref{local inv} that $D:=\textrm{End}_k^0(X)$ is a quaternion division algebra over $K$, that is ramified exactly at the real places of $K$, and hence, we get $r=2$ by Corollary \ref{cor TateEnd0}-(b). In particular, $X$ is a simple abelian surface over $k$, and $D \cong D_{73,\infty} \otimes_{\Q} K$, which, in turn, implies that $J_{D^{\times}}=1$ by Corollary \ref{quad cor}. \\
(2) For $n=2,$ take $h=t^2-17$. \\
(3) For $n=12,$ take $h=t^2-3$. \\  
(4) For $n=24,$ take $h=t^2-2$. \\
(5) For $n=60,$ take $h=t^2-5$. \\
\indent This completes the proof.
\end{proof}
Another way to view the aforementioned results is the following
\begin{remark}\label{dist rem}
Consider a set $S$ of pairs $(n,p)$ of an integer $n \geq 1$ and a prime $p$ with the property that there is a simple abelian surface $X$ over $k:=\F_{q}$ with $q=p^a$ ($a \geq 1$) and $J_{D^{\times}}=n$, where $D:=\textrm{End}_k^0(X).$ Then we can see that: \\
(i) $n \in \{1,2,12,24,60\}$ (see Corollary \ref{simple cor});\\
(ii) $(1,73), (2,17), (12,3), (24,2), (60,5) \in S$ (see the proof of Theorem \ref{realized finite});\\
(iii) If we choose an element $(n,p)$ randomly from $S,$ then it is most probable that $n =12$ in view of Corollary \ref{quad cor} and the construction in the proof of Theorem \ref{realized finite}.
\end{remark}

\section{Over algebraically closed fields}\label{alg closed case}
In this section, we briefly consider the case when the base field of an abelian surface is algebraically closed of positive characteristic. To this aim, we first recall the following fact from Oort \cite{10}:
\begin{proposition}\label{closed prop}
Let $X$ be a simple abelian surface over an algebraically closed field $k$ of characteristic $p > 0.$ Then the endomorphism algebra $\textrm{End}_k^0(X)$ is of one of the following types: \\
(1) $\Q$; \\
(2) a real quadratic field;\\
(3) an indefinite quaternion algebra over $\Q$;\\
(4) a CM-field of degree $4$.
\end{proposition}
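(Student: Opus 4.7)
The plan is to start from Albert's classification recalled in Section~\ref{end alg av}, specialize to the case $g = 2$, enumerate the types allowed by the positive-characteristic column of Table~1, and then rule out those that cannot be realized by a $\overline{k}$-simple abelian surface. Writing $D := \End_{\overline{k}}^{0}(X)$, the admissible possibilities are: Type~I with $e \mid 2$, giving $D = \Q$ or $D$ a real quadratic field; Type~II with $2e \mid 2$, i.e.\ $e = 1$, giving an indefinite quaternion algebra over $\Q$; Type~III with $e \mid 2$, giving a totally definite quaternion algebra over $\Q$ or over a real quadratic field; and Type~IV with $e_{0} d \mid 2$, so $(e_{0},d) \in \{(1,1),(1,2),(2,1)\}$, giving respectively an imaginary quadratic field, a quaternion division algebra over an imaginary quadratic field, or a CM-field of degree $4$.

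The remaining work is to exclude, over $\overline{k}$, the four cases not appearing in the proposition, namely Type~III with $e \in \{1,2\}$ and Type~IV with $(e_{0},d) \in \{(1,1),(1,2)\}$. The key tool for three of these is the classical description of supersingular abelian surfaces: any supersingular abelian surface over $\overline{k}$ is $\overline{k}$-isogenous to $E^{2}$ for a supersingular elliptic curve $E$, and in particular is not $\overline{k}$-simple. For Type~III (definite quaternion over a totally real field of degree $1$ or $2$) and for Type~IV with $(e_{0},d) = (1,2)$, a local invariant computation in the spirit of Proposition~\ref{local inv}, combined with Dieudonn\'e-theoretic analysis of $X[p^{\infty}]$, would force every slope of the Newton polygon to be $1/2$; thus $X$ is supersingular, contradicting simplicity.

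The most delicate elimination is Type~IV with $(e_{0},d) = (1,1)$, where $D = K$ is merely an imaginary quadratic field and $\dim_{\Q} D = 2$, so no supersingularity argument is directly available. My proposal is to exploit that $K$ acts on the $2$-dimensional $\overline{k}$-vector space $\textrm{Lie}(X)$, that $K \otimes_{\Q} \overline{k}$ splits as $\overline{k} \oplus \overline{k}$ via the two embeddings $K \hookrightarrow \overline{k}$, and that the resulting $K$-eigenline decomposition of $\textrm{Lie}(X)$ can be integrated at the level of $p$-divisible groups (and deformations, via Serre--Tate) to produce a splitting of $X$, up to $\overline{k}$-isogeny, as a product $E_{1} \times E_{2}$ of elliptic curves---again contradicting simplicity. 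This is the step I expect to be the main obstacle: unlike the Type~III argument, it uses genuinely positive-characteristic structure (the behaviour of the CM-type of $K$ at a prime above $p$) rather than purely arithmetic input, and is precisely the delicate part handled by Oort's analysis.

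Once Type~III and the two non-$(2,1)$ branches of Type~IV have been excluded, the surviving possibilities are exactly the four items $\Q$, real quadratic field, indefinite quaternion algebra over $\Q$, and CM-field of degree $4$ listed in the statement.
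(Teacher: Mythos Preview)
The paper does not prove this proposition; it simply cites \cite[Proposition~6.1]{10}. Your outline---run Albert's classification with the positive-characteristic column of Table~1, then exclude Type~III (both $e=1,2$) and Type~IV with $(e_0,d)\in\{(1,1),(1,2)\}$---is the right shape, and you are correct that the imaginary-quadratic case $(e_0,d)=(1,1)$ is the delicate one.

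The genuine gap is in your exclusion of Type~III via supersingularity. Proposition~\ref{local inv} is a statement about abelian varieties over \emph{finite} fields: the local invariants of $D$ at places above $p$ are read off from the Frobenius $\pi_X$, and over an arbitrary algebraically closed $k$ there is no Frobenius endomorphism to use. Dieudonn\'e analysis of $X[p^\infty]$ alone does not always force slope $\tfrac{1}{2}$: if $D$ is a definite quaternion algebra over $\Q$ that happens to be \emph{split} at $p$, then $D\otimes\Q_p\cong M_2(\Q_p)$ embeds unitally in $\textrm{End}^0(X[p^\infty])\cong M_2(\Q_p)\times M_2(\Q_p)$ when $X$ is ordinary, and no contradiction arises from the $p$-divisible group. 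Eliminating this ordinary sub-case needs a different idea---for instance, the canonical lift carries $D$ to characteristic~$0$, where the Type~III Shimura variety for $(g,e)=(2,1)$ is zero-dimensional, so the lift (and hence $X$) is forced to be non-simple. For Type~III with $e=2$ and Type~IV with $(e_0,d)=(1,2)$, the algebra $D$ contains a field of degree $4=2g$, so $X$ has sufficiently many complex multiplications and one can first pass to $\overline{\F_p}$ before invoking the finite-field machinery of Proposition~\ref{local inv}; this route works, but it is not the bare slope computation you sketch.
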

\begin{proof}
For a proof, see \cite[Proposition 6.1]{10}.
\end{proof}

In view of Proposition \ref{closed prop} and Theorems \ref{over Q main thm}, \ref{over quad main thm}, we obtain the following somewhat simple result:  
\begin{theorem}\label{realized closed}
Let $n$ be an integer. Then there exists a simple abelian surface $X$ over some algebraically closed field $k$ of characteristic $p > 0$ such that $J_{D^{\times}}=n,$ where $D:=\textrm{End}_k^0(X)$ if and only if $n=1.$
\end{theorem}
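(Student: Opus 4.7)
The strategy is to combine Proposition \ref{closed prop}, which restricts the possible endomorphism algebras, with Theorem \ref{over Q main thm}, which computes the Jordan constants in the quaternion case, and with trivial considerations when $D$ is commutative.

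For the "only if" direction, suppose $X$ is a simple abelian surface over an algebraically closed field $k$ of characteristic $p > 0$, and put $D = \textrm{End}_k^0(X)$. By Proposition \ref{closed prop}, $D$ falls into one of four types. In cases (1), (2), (4) — namely $D = \Q$, $D$ a real quadratic field, or $D$ a CM-field of degree $4$ — the algebra $D$ is commutative, so $D^\times$ is abelian; every finite subgroup of $D^\times$ is then abelian and normal in itself, yielding $J_{D^\times} = 1$. In case (3), $D$ is an \emph{indefinite} quaternion algebra over $\Q$, so $\infty \notin \textrm{Ram}(D)$. In particular $\textrm{Ram}(D)$ is neither $\{2,\infty\}$ nor $\{3,\infty\}$, so Theorem \ref{over Q main thm} places us in the "otherwise" branch and gives $J_{D^\times} = 1$. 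Thus in every case $n = 1$.

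For the "if" direction, it suffices to exhibit a single simple abelian surface $X$ over an algebraically closed field $k$ of positive characteristic with $J_{D^\times} = 1$. The simplest choice is to take $X$ with $\textrm{End}_k^0(X) = \Q$ (type (1) in Proposition \ref{closed prop}): for instance the Jacobian of a sufficiently general smooth projective curve of genus $2$ over $\overline{\F}_p$, whose endomorphism ring over $\overline{\F}_p$ is $\Z$. Then $D^\times = \Q^\times$ is abelian and $J_{D^\times} = 1$, as required.

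The only step with any real content is the "only if" direction, and within it, case (3): we must invoke the classification of finite subgroups of indefinite quaternion division algebras over $\Q$ carried out in Theorem \ref{over Q main thm}, noting crucially that the "large" Jordan constants $12$ and $2$ occur only when $\infty$ is in the ramification set. All other cases are immediate from commutativity. Exhibiting the example in the "if" direction presents no difficulty, since generic Jacobians of genus $2$ curves are well known to have $\Q$ as their endomorphism algebra.
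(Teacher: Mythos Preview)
Your ``only if'' direction is correct and essentially identical to the paper's argument: Proposition~\ref{closed prop} reduces to four cases, three of which are commutative (hence $J_{D^\times}=1$), and in the indefinite quaternion case Theorem~\ref{over Q main thm} forces $J_{D^\times}=1$ since $\infty\notin R_{\mathcal{D}}$.

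The ``if'' direction, however, contains a genuine error. You propose to take the Jacobian of a ``sufficiently general'' genus~$2$ curve over $\overline{\F}_p$ and claim its endomorphism ring is $\Z$. This is false: no abelian surface over $\overline{\F}_p$ has $\textrm{End}^0=\Q$. Indeed, any abelian variety over $\overline{\F}_p$ descends to some finite field $\F_q$, and if it is simple over $\overline{\F}_p$ then the model over $\F_q$ is also simple; after enlarging $q$ so that all endomorphisms are defined over $\F_q$, Corollary~\ref{cor TateEnd0}(c) gives $\dim_{\Q}\textrm{End}^0_{\F_q}(X)\geq 2g=4$. Thus $\textrm{End}^0_{\overline{\F}_p}(X)$ has dimension at least $4$ and cannot be $\Q$. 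The intuition that ``generic Jacobians have $\textrm{End}=\Z$'' is valid only over fields with transcendental elements; over $\overline{\F}_p$ there is no generic point in the relevant sense, and every closed point of the moduli space yields a CM abelian variety.

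The paper avoids this issue by realizing type~(3) rather than type~(1): it fixes an indefinite quaternion division algebra $D$ over $\Q$ unramified at $p$ and invokes \cite[Proposition~3.11]{10} (Oort) to produce an absolutely simple abelian surface $X$ over a field $k$ of characteristic $p$ with $\textrm{End}^0_{\overline{k}}(X_{\overline{k}})=D$. Your strategy of realizing type~(1) could be salvaged by working over $\overline{\F_p(t)}$ instead of $\overline{\F}_p$, but you would then need to supply a reference or argument that a suitable abelian surface with $\textrm{End}^0=\Q$ exists there; this is not as immediate as you suggest.
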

\begin{proof}
Suppose first that there is a simple abelian surface $X$ over an algebraically closed field $k$ of characteristic $p > 0$ with $D:=\textrm{End}_k^0(X).$ Then $D$ is of one of the four types in Proposition \ref{closed prop}. If $D$ is either $\Q$ or a real quadratic field or a quartic CM-field, then every finite subgroup of $D^{\times}$ is cyclic, and hence, we get $J_{D^{\times}} =1.$ If $D$ is an indefinite quaternion algebra over $\Q,$ then since $D$ is not ramified at the infinite place of $\Q$ (by the definition of indefiniteness), it follows from Theorem \ref{over Q main thm} that $J_{D^{\times}}=1.$ Conversely, take $n=1$. Let $p>0$ be a prime and let $D$ be an indefinite quaternion division algebra over $\Q$ such that $D$ is not ramified at $p.$ Then by \cite[Proposition 3.11]{10}, there exists a field $k$ of characteristic $p$ and an absolutely simple abelian surface $X$ over $k$ with $\textrm{End}_{\overline{k}}^0(X_{\overline{k}}) = D,$ where $X_{\overline{k}}:=X \times_k \overline{k}.$ Now, we note that $J_{D^{\times}}=1(=n)$ by Theorem \ref{over Q main thm}. \\ 
\indent This completes the proof. 
\end{proof}

In particular, both of Theorems \ref{realized finite} and \ref{realized closed} assert that every candidate for the Jordan constant is indeed realizable.  

\begin{remark}
In fact, we can obtain a similar result for special higher dimensional cases in the following sense: let $g \geq 3$ be a prime. Let $X$ be a simple abelian variety of dimension $g$ over an algebraically closed field $k$ with $\textrm{char}(k) = p >0.$ Let $D=\textrm{End}_k^0(X).$ Then in view of \cite[$\S$7]{10} or \cite[Remark 2.2]{Hwang(2019)}, $D$ is of one of the following types: \\
(i) $D=\Q$; \\
(ii) $D$ is a totally real field of degree $g$;\\
(iii) $D=D_{p,\infty}$ if (and only if) $g \geq 5$;\\
(iv) $D$ is an imaginary quadratic field;\\
(v) $D$ is a CM-field of degree $2g$;\\
(vi) $D$ is a central simple division algebra of degree $g$ over an imaginary quadratic field and the $p$-rank of $X$ is equal to $0.$ \\
\indent Now, for items (i), (ii), (iv), and (v), we easily see that $J_{D^{\times}}=1.$ For the case when $D=D_{p,\infty}$ and $g \geq 5,$ then $J_{D^{\times}} \in \{1,2,12\}$ by Theorem \ref{over Q main thm}. The only possibly different phenomenon can occur for the item (vi). But then, in light of \cite[Theorem 3.5]{Hwang(2019)}, we know that every finite subgroup of $D^{\times}$ is cyclic, and hence, we also get $J_{D^{\times}}=1.$ Consequently, we can conclude that $J_{D^{\times}}=1$ if $g=3$, and $J_{D^{\times}} \in \{1,2,12\}$ if $g \geq 5.$
\end{remark} 

\section*{Acknowledgement}
The author was supported by a KIAS Individual Grant (MG069901) at Korea Institute for Advanced Study. Also, the author sincerely thanks Professor Prokhorov for making valuable comments on the Introduction of this paper.

\bibliographystyle{amsplain}

\begin{thebibliography}{00}
	\bibitem{1}
	{S.A. Amitsur,} Finite Subgroups of Division Rings, Trans. Amer. Math. Soc. \textrm{80} (1955), 361-386.
        \bibitem{CB2016}
        {C. Birkar,} Singularities of linear systems and boundedness of Fano varieties, arXiv:1609.05543
	\bibitem{2}
	{B. Edixhoven, G. van der Geer, B. Moonen,} Abelian Varieties, gerard.vdgeer.net/AV.pdf
	\bibitem{4}
	{T. Honda,} Isogeny classes of abelian varieties over finite fields, J. Math. Soc. Japan Vol 20, (1968), 83-95.
	\bibitem{Hwang(2018)}
	{W. Hwang,} On a classification of the automorphism groups of polarized abelian surfaces over finite fields, arXiv:1809.06251
	\bibitem{Hwang(2019)}
	{W. Hwang,} Automorphism groups of simple polarized abelian varieties of odd prime dimension over finite fields, J. Number Theory, \textbf{204}, (2019), 561-578.
	\bibitem{MD(2018)}
	{S. Meng, D. Zhang,} Jordan properties for non-linear algebraic groups and projective varieties. Amer. J. Math., \textbf{140}, (2018), 1133-1145.
        \bibitem{8}
	{D. Mumford,} Abelian Varieties, Hindustan Book Agency (2008), Corrected reprint of the 2nd edition.
        \bibitem{Nebe(1998)}
        {G. Nebe}, {\em Finite Quaternionic Matrix Groups}, Represent. Theory Vol 2 (1998), 106-223.
	\bibitem{10}
	{F. Oort,} Endomorphism Algebras of Abelian Varieties, Algebraic Geometry and Commutative Algebra in Honor of Masayoshi NAGATA, (1987), 469-502.
	\bibitem{Popov(2010)}
	{V.L. Popov,} On the Makar-Limanov, Derksen invariants, and finite automorphism groups of algebraic varieties, In: Affine Algebraic Geometry: The Russell Festschrift, CRM Proceedings and Lecture Notes, Vol. \textbf{54}, Amer. Math. Soc., (2011), 289-311.
	\bibitem{Popov}
	{V.L. Popov,} Jordan groups and automorphism groups of algebraic varieties, In: Cheltsov I., Ciliberto C., Flenner H., McKernan J., Prokhorov Y., Zaidenberg M. (eds) Automorphisms in Birational and Affine Geometry. Proceedings in Mathematics $\&$ Statistics, vol \textrm{79}, Springer, Cham, (2014), 185-213.
        \bibitem{PS(2016)}
	{Y. Prokhorov, C. Shramov,} Jordan property for Cremona groups, Amer. J. Math., \textrm{138}, no.2, (2016), 403-418.
        \bibitem{PS(2017)}
	{Y. Prokhorov, C. Shramov,} Jordan constant for Cremona group of rank $3$, Mosc. Math. J., \textrm{17}, no.3, (2017), 457-509.
        \bibitem{PS(2018)}
	{Y. Prokhorov, C. Shramov,} Finite groups of birational selfmaps of threefolds, Math. Res. Lett., \textrm{25}, no.3, (2018), 957-972.
	\bibitem{11}
	{J. Tate,} Endomorphisms of Abelian Varieties over Finite Fields, Invent. Math., \textbf{2} (1966), 131-144.
	\bibitem{12}
	{W.C. Waterhouse,} Abelian varieties over finite fields, Ann. Sci. \textrm{$\acute{E}$}cole Norm. Sup., (4), \textbf{2}, (1969), 521-560.
\end{thebibliography}

\end{document}